\newtheorem{theorem}{Theorem}[section]
\newtheorem{corollary}[theorem]{Corollary}
\newtheorem{lemma}[theorem]{Lemma}
\newtheorem{proposition}[theorem]{Proposition}
\newtheorem{claim}[theorem]{Claim}
\newtheorem{fact}[theorem]{Fact}
\newcommand{\claimproof}{\renewcommand{\qedsymbol}{$\diamond$}}  
\newenvironment{claimpf}{\begin{proof}}{\end{proof}}
\newcommand{\swhom}[3][\Gamma]{#2\to_{#1}#3}
\newcommand{\swequiv}[3][\Gamma]{#2 \cong_{#1}#3}
\newcommand{\sweq}[1][$\Gamma$]{#1-equivalent}
\newcommand{\swhomom}[1][$\Gamma$]{#1-homomorphism}
\newcommand{\swg}[2][\Gamma]{sw_{#1}(#2)}
\newcommand{\hcol}[1][$H$]{#1-\textsc{Colouring}}
\newcommand{\hhom}[1][$H$]{\textsc{Hom}-#1}
\newcommand{\swhhom}[2][$\Gamma$]{#1-\textsc{Hom}-#2}
\begin{document}
	\title{A dichotomy theorem for $\Gamma$-switchable $H$-colouring on $m$-edge-coloured graphs}
	\author{Richard Brewster\thanks{Department of Mathematics and Statistics, Thompson Rivers University, Kamloops, BC, Canada}, Arnott Kidner\thanks{Institut f\"{u}r Mathematik, Universit\"{a}t Paderborn, Paderborn, Nordrhein-Westfalen, Deutschland}, Gary MacGillivray\thanks{Department of Mathematics and Statistics, University of Victoria, Victoria, BC, Canada}}
	\date{}
	\maketitle
	
	\begin{abstract}
 Let $G$ be a graph in which each edge is assigned one of the colours $1, 2, \ldots, m$, and let $\Gamma$ be a subgroup of $S_m$.
 The operation of switching at a vertex $x$ of $G$ with respect to an
element $\pi$ of $\Gamma$ permutes the colours of the edges incident with $x$ according to $\pi$.  
We investigate the complexity of whether there exists a sequence of switches that transforms a given $m$-edge-coloured graph $G$ so that it has an edge-colour-preserving homomorphism to a fixed $m$-edge-coloured graph $H$ and give a dichotomy theorem in the case that $\Gamma$ acts transitively.
	\end{abstract}

\section{Introduction}\label{sec:intro}

A \emph{homomorphism} of a graph $G$ to a graph $H$ is a mapping $f: V(G) \to V(H)$ such that whenever $uv \in E(G)$ we have $f(u)f(v) \in E(H)$. The vertices of $H$ may be regarded as colours so a homomorphism is an assignment of colours to the vertices of $G$ such that adjacent vertices in $G$ are assigned colours that are adjacent in $H$.  Thus, homomorphisms generalize vertex colourings. This viewpoint allows one to study natural questions about colourings for homomorphisms and their generalizations (see~\cite{hel08} and its references). For example, the \hcol[$k$] problem generalizes as follows.  Let $H$ be a fixed graph.

	\begin{quote}
		\hhom\ \\
		\textbf{Instance:} A graph $G$. \\
		\textbf{Question:} Is there a homomorphism of $G$ to $H$?
	\end{quote}		

The result that \hcol[$k$] is polynomial-time solvable for $k \leq 2$ and NP-complete for $k \geq 3$ generalizes to the Hell-Ne\v{s}et\v{r}il Homomorphism Dichotomy Theorem.

\begin{theorem}\label{thm:HN}
    Let $H$ be a fixed graph. Then the \hhom[$H$] problem is polynomial-time solvable if $H$ is bipartite; otherwise, \hhom\ is NP-complete.
\end{theorem}  
(We note that \hhom\ is also called \hcol\ in the literature but we are opting for the former in this paper as it is more convenient for our setting defined below.)

Homomorphisms are naturally defined on other graph-like objects, e.g. digraphs or $m$-edge-coloured graphs, and thus the computational complexity of \hhom\ may also be studied in these settings. The main result of this work is a structural (or combinatorial) dichotomy theorem for the homomorphism problem in the setting of $m$-edge-coloured graphs endowed with an edge colour switching operation. (Formal definitions are below.)  We show, analogous to the Hell-Ne\v{s}et\v{r}il Dichotomy Theorem, the problem is polynomial-time solvable if $H$ is homomorphically equivalent to $K_1$ or $K_2$; otherwise, the problem is NP-complete.

\subsection{Preliminaries}

The general setting for our work is as follows. Let $G=(V,E)$ be a graph. We obtain a \emph{mixed graph} from $G$ by choosing a subset $A \subseteq E(G)$ and an orientation for each element of $A$, so the resulting structure has both edges and arcs. 
We restrict our attention to the case where $G$ is a simple graph, thus obtaining what could (and should) be called a \emph{simple} mixed graph but we drop the qualifier ``simple'' because the context is clear throughout the paper.
Mixed graphs were introduced by Ne\v{s}et\v{r}il and Raspaud~\cite{nes00} in an attempt to unify the homomorphism results for oriented graphs and 2-edge-coloured graphs. For fixed non-negative integers $m, n$, an $(m,n)$\emph{-mixed graph} is a mixed graph $G$ where each edge has been assigned a colour from $\{ 1, 2, \dots, m \}$ and each arc has been assigned a colour from $\{ m+1, m+2, \dots, m+n \}$. 

We now introduce the operation of \emph{switching}. The study of switching 
involves some basic concepts from algebra, for which we refer to the text by Gallian~\cite{Gallian_2017}.
Given an $(m,n)$-mixed graph and a vertex $v$, the operation of \emph{switching at $v$} permutes the colours of edges, the colours of arcs, and the direction of arcs incident with $v$ with respect to some element of a permutation group acting on the edge colours, the arc colours, and the arc directions. 
(Homomorphisms of $(m,n)$-mixed graphs with a switching operation are studied in~\cite{kid21, bre_kid_mac21}, see also~\cite{sen22}.)

While the set-up of this work applies to general $(m,n)$-mixed graphs, establishing a structural dichotomy theorem with a mixture of edges and arcs will likely be complicated as described in Subsection~\ref{subsec:CSP}. (A reduction of the arcs only case to the edges only case for bipartite graphs appears in~\cite{kid21}.) Thus, for this paper we will restricted our attention to simple undirected graphs with coloured edges. 

Formally, for a fixed integer $m \geq 1$, an \emph{$m$-edge-coloured graph} is an ordered pair $G = (G_u, \sigma)$ where $G_u$ is a graph and $\sigma$ is a function that assigns each edge of $G_u$ a colour from $\{ 1, 2, \dots, m \}$. We call $G_u$ the \emph{underlying graph} of $G$. As noted above, we restrict our attention to the case
where $G_u$ is simple. We say $G$ is a \emph{cycle} if the underlying graph $G_u$ is a cycle, and similarly for paths, trees, etc.

To define switching on an $m$-edge-coloured graph,
let $\Gamma \subseteq S_m$ be a permutation group acting on $\{ 1, 2, \dots, m \}$.  Let $G$ be an $m$-edge-coloured graph.  Given a permutation $\pi \in \Gamma$ and a vertex $v \in V(G)$, the $m$-edge-coloured graph $G^{(v,\pi)}$ is obtained by applying $\pi$ to each edge incident with $v$ as follows.  If $vu$ is an edge of colour $i$ in $G$, then $vu$ is an edge of colour $\pi(i)$ in $G^{(v,\pi)}$.  We call this operation \emph{switching at $v$ with respect to $\pi$}.  Given a sequence $\Sigma = (v_1, \pi_1), (v_2, \pi_2), \dots, (v_k, \pi_k)$, \emph{switching with respect to $\Sigma$} is defined as

	$$
	G^{\Sigma} = (G^{(v_1,\pi_1)})^{(v_2,\pi_2), \dots, (v_k, \pi_k)}.
	$$

We call $\Sigma$ a \emph{switching sequence}. Whenever $G'$ can be obtained from $G$ through a sequence of switches, i.e., $G'=G^{\Sigma}$, we say $G$ and $G'$ are \emph{\sweq}, denoted $\swequiv{G}{G'}$.  (Note that the underlying graph of $G$ and $G'$ is the same.)  Of particular importance is the case when $\Gamma$ is Abelian.  Clearly if we switch consecutively at the same vertex, we may combine the switches into a single switch, i.e., $G^{(v,\pi_1)(v,\pi_2)} = G^{(v,\pi_2 \pi_1)}$.  In particular, if $\Gamma$ is Abelian, we can reorder any switching sequence $\Sigma$ so that for any vertex $v$, all switches on $v$ are consecutive in $\Sigma$ and thus can be combined into a single switch.  This is an important fact we use below and thus we record in here.

\begin{proposition}\label{prop:atmostonce}
Let $G$ be an $m$-edge-coloured graph and let $\Gamma \subseteq S_m$ be an Abelian group.  Suppose $\Sigma$ is a switching sequence.  Then there is a switching sequence $\Sigma'$ such that
\[
G^{\Sigma} = G^{\Sigma'}
\]
and under $\Sigma'$ each vertex of $G$ is switched at most once.
\end{proposition}

The polynomial-time results below require that $\Gamma$ acts transitively on $\{1, 2,$ $ \dots, m \}$; hence, for ease of terminology, we call a  group $\Gamma$ an \emph{$m$-switching group} if $\Gamma \subseteq S_m$  and $\Gamma$ acts transitively on $\{1, 2, \ldots, m\}$.

\subsection{Switching homomorphisms and CSPs}\label{subsec:CSP}

We have defined $(m,n)$-mixed graphs to be constructed through assigning colours and orientations to edges of an underlying graph.  Equivalently, an $(m,n)$-mixed graph may be viewed as a set of vertices $V$ together with $m$ symmetric binary relations and $n$ anti-symmetric binary relations on $V$. Extending this idea further, a \emph{relational system} $\mathbf{H}$ is a set of vertices, $V(H)$, together with $k$ relations where relation $R_i$ has arity $\sigma_i$.  The tuple $(\sigma_1, \dots, \sigma_k)$ is the \emph{signature} of $\mathbf{H}$. A homomorphism of relational systems $\mathbf{G}$ to $\mathbf{H}$ (over the same signature) is naturally defined as a vertex mapping which preserves all relations. For a fixed relational system $\mathbf{H}$, the \hhom[$H$] problem from the graph setting naturally generalizes to the homomorphism problem for $\mathbf{H}$, denoted \hhom[$\mathbf{H}$]. A \emph{Constraint Satisfaction Problem} (CSP) is a problem that can be stated as \hhom[$\mathbf{H}$] for some fixed relational system $\mathbf{H}$.

The settling of the CSP dichotomy conjecture~\cite{fed93} by Bulatov~\cite{bul17} and Zhuk~\cite{zhu20} gives that \hhom[$\mathbf{H}$]\ is either polynomial-time solvable or NP-complete for any fixed $\mathbf{H}$.  The description of the dichotomy is through symmetries and algebra. For many restricted families of targets, there is a  structural (combinatorial) description of the \hhom[$\mathbf{H}]$ dichotomy for that family. Such a dichotomy for the family of all undirected graphs is the Hell-Ne\v{s}et\v{r}il Dichotomy Theorem given above. On the other hand, in~\cite{fed98}, it is shown that for each CSP, there is a directed acyclic graph (DAG) $H$ such that the CSP and \hhom\ are polynomial-time equivalent.  Thus a structural  description of the dichotomy for \hhom\ when restricted to DAGs is likely complicated.  
In~\cite{bre17} the authors modify the proof of~\cite{fed98} to show for each CSP there is a $2$-edge-coloured graph $H$ with a polynomial-time equivalent \hhom\ problem.  Consequently, a structural description of the homomorphism dichotomy for $(0,1)$-mixed graphs or $(2,0)$-mixed graphs is likely complicated.  

In contrast to general CSPs, the introduction of a switching operation seems to lead to simpler homomorphism dichotomy descriptions.  For oriented graphs this has been observed in~\cite{klo04} and for $2$-edge-coloured graphs, i.e., \emph{signed graphs}, a simple dichotomy description is given in~\cite{bre17, bre18}. 

We now formally define switching homomorphisms. Given $m$-edge-coloured graphs $G$ and $H$, a \emph{homomorphism} of $G$ to $H$ is a function $f:V(G) \to V(H)$ such that if $uv$ is an edge of colour $i$ in $G$, then  $f(u)f(v)$ is an edge of colour $i$ in $H$. We sometimes use the notation $f: G \to H$ to denote a homomorphism $f$ of $G$ to $H$, or  $G \to H$ when the name of the function is not required.  We say $G$ admits a \emph{\swhomom} to $H$, and write $\swhom{G}{H}$, if $G' \to H$ for some $G'$ which is \sweq\ to $G$.
If $\swhom{G}{H}$ and $H'$ is \sweq\ to $H$, then it is easy to verify that 
$\swhom{G}{H'}$ (apply switches on $H$ to pre-images in $G$).  
A similar argument is used to prove the following proposition, which has previously appeared in several contexts.

\begin{proposition}[\cite{klo99, bre09, bre17, macwar21}]\label{prop:compose}
Let $H_1, H_2$ and $H_3$ be $m$-edge-coloured graphs, and let $\Gamma$ be an $m$-switching group.
    If $f$ is a \swhomom\ of $H_1$ to $H_2$ and $g$ is a \swhomom\ of $H_2$ to $H_3$, then 
    $g \circ f$ is a \swhomom\ of $H_1$ to $H_3$.
\end{proposition}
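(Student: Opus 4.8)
The plan is to isolate a single ``pull-back'' property of ordinary homomorphisms of $m$-edge coloured graphs and then derive the proposition from it, using that \sweq{} is an equivalence relation.

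\medskip
\noindent\textbf{Key Claim.} \emph{If $\phi$ is a homomorphism of an $m$-edge coloured graph $A$ to an $m$-edge coloured graph $B$ and $\Sigma$ is a switching sequence (over $\Gamma$), then there is a switching sequence $\Sigma^{*}$ (over $\Gamma$) such that $\phi$ is a homomorphism of $A^{\Sigma^{*}}$ to $B^{\Sigma}$.}

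\medskip
I would prove the Key Claim by induction on the length of $\Sigma$, the base case being a single switch $\Sigma = (v,\pi)$. Writing $\phi^{-1}(v) = \{u_1, \dots, u_t\}$, set $\Sigma^{*} = (u_1,\pi),(u_2,\pi),\dots,(u_t,\pi)$. Consider any edge $xy$ of colour $c$ in $A$, so that $\phi(x)\phi(y)$ is an edge of colour $c$ in $B$. The crucial observation is that, since underlying graphs are simple, $\phi(x) \ne \phi(y)$, so $x$ and $y$ cannot both lie in $\phi^{-1}(v)$; consequently the colour of $xy$ is changed by $\Sigma^{*}$ exactly once if $\phi(x) = v$ or $\phi(y) = v$, and not at all otherwise. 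This is precisely the condition under which the colour of $\phi(x)\phi(y)$ is changed by the switch $(v,\pi)$, and in that case both colours become $\pi(c)$. Hence $\phi$ is a homomorphism of $A^{\Sigma^{*}}$ to $B^{(v,\pi)}$. For the inductive step I would peel off the first switch of $\Sigma$ via the base case, apply the inductive hypothesis to the resulting homomorphism together with the remaining switches, and concatenate the two pull-back sequences.

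To finish, suppose $f$ is a \swhomom{} of $H_1$ to $H_2$ and $g$ is a \swhomom{} of $H_2$ to $H_3$. By definition there are $H_1'$ with $\swequiv{H_1}{H_1'}$ and an ordinary homomorphism $f : H_1' \to H_2$, and $H_2'$ with $\swequiv{H_2}{H_2'}$ and an ordinary homomorphism $g : H_2' \to H_3$; fix a switching sequence $\Sigma$ with $H_2^{\Sigma} = H_2'$. Applying the Key Claim to $f : H_1' \to H_2$ and $\Sigma$ yields a switching sequence $\Sigma^{*}$ for which $f$ is a homomorphism of $(H_1')^{\Sigma^{*}}$ to $H_2^{\Sigma} = H_2'$. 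Since \sweq{} is transitive (concatenate switching sequences), $\swequiv{H_1}{(H_1')^{\Sigma^{*}}}$. Finally $g \circ f$ is the composition of the ordinary homomorphisms $(H_1')^{\Sigma^{*}} \to H_2' \to H_3$, hence an ordinary homomorphism from $(H_1')^{\Sigma^{*}}$ to $H_3$; together with $\swequiv{H_1}{(H_1')^{\Sigma^{*}}}$ and $V\big((H_1')^{\Sigma^{*}}\big) = V(H_1)$, this exhibits $g \circ f$ as a \swhomom{} of $H_1$ to $H_3$.

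The main obstacle is the Key Claim, and within it the only delicate point is verifying that the pulled-back switches on $A$ act compatibly with the single switch on $B$; this is exactly where simpleness of the underlying graphs is used, as it excludes an edge both of whose ends are sent to the switched vertex. Everything else is routine bookkeeping (note that transitivity of $\Gamma$ plays no role here).
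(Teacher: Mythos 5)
Your proof is correct and takes essentially the route the paper sketches: the parenthetical remark ``apply switches on $H$ to pre-images in $G$'' is exactly your Key Claim, which you have simply made precise (including the observation that simplicity of the underlying graphs prevents both ends of an edge from mapping to the switched vertex). Nothing further is needed.
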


We conclude this section with a remark on the restriction to $m$-edge-coloured graphs and transitive groups.  The equivalence to DAGs in~\cite{fed98} shows that a structural dichotomy theorem even for $(0,1)$-mixed graphs with a trivial switching group $\Gamma$ is likely complicated.  Modifying the proof of~\cite{fed98} shows that the case where $\Gamma$ is non-transitive is also likely to be complicated as is the case where $G$ has both edges and arcs.  Hence, we believe our restrictions here are natural.

\subsection{Main Results and Organization}

The focus of this paper is the following decision problem.  Let $H$ be a fixed $m$-edge-coloured graph and $\Gamma$ be a fixed permutation group acting on $\{ 1, 2, \dots, m \}$.
	
	\begin{quote}
		\swhhom{$H$} \\
		\textbf{Instance:} An $m$-edge-coloured graph $G$. \\
		\textbf{Question:} Is there a \swhomom\ of $G$ to $H$?
	\end{quote}		

To state our main theorem we require the following standard notion.  Two $m$-edge-coloured graphs $H_1$ and $H_2$ are \emph{$\Gamma$-homomorphically equivalent} if there exist homomorphisms $\swhom{H_1}{H_2}$ and  $\swhom{H_2}{H_1}$. This defines an equivalence relation on the set of $m$-edge-coloured graphs. If $H_1$ and $H_2$ are $\Gamma$-homomorphically equivalent, then by Proposition~\ref{prop:compose} there is a \swhomom\ of an $m$-edge-coloured graph $G$ to $H_1$ if and only if there is a \swhomom\ of $G$ to $H_2$.
After noting that every bipartite (classical) graph is homomorphically equivalent to $K_1$ or $K_2$, the Hell-Ne\v{s}et\v{r}il Dichotomy Theorem can be restated as:
\emph{If the fixed graph $H$ is homomorphically equivalent to $K_1$ or $K_2$, then the \hhom[$H$] problem is polynomial-time solvable; otherwise, it is NP-complete.}  Our main result is the analogous statement for $m$-edge-coloured graphs holds in the case that $\Gamma$ is an $m$-switching group.  Let $K_2^{i}$ be the $m$-edge-coloured graph with underlying graph equal to $K_2$ and its unique edge having colour $i$.

\begin{theorem}\label{thm:main}
Let $H$ be a fixed $m$-edge-coloured graph and let $\Gamma$ be a fixed $m$-switching group.  If $H$ is $\Gamma$-homomorphically equivalent to $K_1$ or $K_2^{i}$ (for any $i$), then the \swhhom{$H$} problem is polynomial-time solvable; otherwise, it is NP-complete.
\end{theorem}

The paper is organized as follows.  In Section~\ref{sec:abdic} we restrict to the case that $\Gamma$ is Abelian. We introduce the switch graph which allows us to show that \swhhom{$H$} is indeed a CSP and thus the problem admits a dichotomy. We then establish Theorem~\ref{thm:main} for the Abelian case.

In Section~\ref{sec:groupab} we settle the case when $\Gamma$ is non-Abelian.  Here it is not clear that \swhhom{$H$} is a CSP nor that it is even in NP. Our contribution in this section is to show for each $m$-switching group $\Gamma$ there is an integer $m'$, an Abelian $m'$-switching group $\Gamma'$, and an edge-coloured graph $H'$ such that \swhhom{$H$}\  and \swhhom[$\Gamma'$]{$H'$} are polynomially equivalent.  By reducing to the Abelian case we establish Theorem~\ref{thm:main} for all fixed simple $m$-edge-coloured graphs $H$ and all $m$-switching groups $\Gamma$.  

\section{A dichotomy for Abelian $\Gamma$}\label{sec:abdic}

In this section we establish a dichotomy theorem for the \swhhom{$H$} problem when the $m$-switching group $\Gamma$ is Abelian. 

We first note it is not immediately clear whether testing the existence of a \swhomom\ to a fixed target is indeed a CSP as each CSP is equivalent to testing the existence of a homomorphism to a fixed target $\mathbf{H}$. Where as, a \swhomom\ is a two step process: switch $G$ to some $G'$, then map $G' \to H$.  The following construction has appeared in many contexts~\cite{klo99, bre09, bre17, kid21, macwar21}.  Given an $m$-edge-coloured graph $H$ and a fixed \emph{Abelian} permutation group $\Gamma$ acting (perhaps not transitively) on $\{ 1, 2, \dots, m \}$, the \emph{$\Gamma$-switch graph}, $\swg{H}$, has vertices $V(H) \times \Gamma$ and edges defined as follows.  If $v_j v_k$ is an edge of colour $i$ in $H$, then $(v_j, \pi_r)(v_k, \pi_s)$ is an edge of colour $\pi_r\pi_s(i)$ in $\swg{H}$.  As a convention we shall use $e_\Gamma$ to denote the identity of $\Gamma$.  In particular, the vertices of the form $(v, e_\Gamma)$ induce a copy of $H$ in $\swg{H}$.
The following proposition shows that the existence of a $\Gamma$-homomorphism $\swhom{G}{H}$ is equivalent to the existence of a homomorphism $G \to \swg{H}$ (without switching).
	
	\begin{proposition}[\cite{klo99, bre09, bre17, kid21, macwar21}] \label{prop:withoutSwitch}
	Let $G$ and $H$ be $m$-coloured graphs and $\Gamma \subseteq S_m$ be an Abelian group acting on $\{ 1, 2, \dots, m \}$. The following are equivalent.
	
	\begin{list}{(\alph{enumi})}{\usecounter{enumi}}
	  \setlength{\itemsep}{0pt}
	  \item $\swhom{G}{H}$,
	  \item $G \to \swg{H}$,
	  \item $\swg{G} \to \swg{H}$.
	\end{list}
	\end{proposition}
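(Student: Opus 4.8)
The plan is to prove the three statements pairwise equivalent by writing down explicit vertex maps; essentially all of the work is bookkeeping with the group elements attached to vertices, together with two points at which the Abelian hypothesis is genuinely used. Throughout, for a switching sequence $\Sigma$ and a vertex $v$ I would let $\tau_v\in\Gamma$ be the product of all permutations that $\Sigma$ applies at $v$; because $\Gamma$ is Abelian this is well defined independently of their order, and one checks directly that an edge $uv$ of colour $i$ in $G$ acquires colour $\tau_u\tau_v(i)$ in $G^{\Sigma}$ (the outcome being the same whether one switches at $u$ or at $v$ first, again by commutativity).

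For (a) $\Rightarrow$ (b), given such a $\Sigma$ with a homomorphism $f\colon G^{\Sigma}\to H$, define $g\colon V(G)\to V(H)\times\Gamma$ by $g(v)=(f(v),\tau_v^{-1})$. On an edge $uv$ of colour $i$ the vertices $g(u),g(v)$ are joined in $\swg{H}$ by an edge whose colour is $\tau_u^{-1}\tau_v^{-1}\big(\tau_u\tau_v(i)\big)=i$, the inner factor $\tau_u\tau_v(i)$ being the colour of $f(u)f(v)$ in $H$; thus $g$ is a homomorphism $G\to\swg{H}$, and the collapse $\tau_u^{-1}\tau_v^{-1}\tau_u\tau_v=e_\Gamma$ is precisely where commutativity enters. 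For (b) $\Rightarrow$ (a), write a homomorphism $g\colon G\to\swg{H}$ as $g(v)=(h(v),\tau_v)$ and switch $G$ at every vertex $v$ with respect to $\tau_v^{-1}$, obtaining some $G'$ with $\swequiv{G}{G'}$; since $g$ is colour-preserving, an edge $uv$ of colour $i$ in $G$ corresponds to an edge $h(u)h(v)$ of colour $(\tau_u\tau_v)^{-1}(i)$ in $H$, which is exactly the colour of $uv$ in $G'$, so $h\colon G'\to H$ is a homomorphism and $\swhom{G}{H}$.

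It remains to connect (b) and (c). The implication (c) $\Rightarrow$ (b) is immediate, since $v\mapsto(v,e_\Gamma)$ is a homomorphism of $G$ onto the copy of $G$ inside $\swg{G}$, and composing it with any homomorphism $\swg{G}\to\swg{H}$ yields (b). For (b) $\Rightarrow$ (c), I would lift $g(v)=(h(v),\tau_v)$ to $\hat g\colon\swg{G}\to\swg{H}$ by $\hat g(v,\pi)=(h(v),\tau_v\pi)$ and check that a generic edge $(u,\pi)(v,\rho)$ of $\swg{G}$, which arises from an edge $uv$ of colour $i$ in $G$ and hence has colour $\pi\rho(i)$, is sent to an edge of colour $(\tau_u\pi)(\tau_v\rho)(\tau_u\tau_v)^{-1}(i)=\pi\rho(i)$; once again the reduction to $\pi\rho(i)$ uses that $\Gamma$ is Abelian.

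I do not expect any serious obstacle: the proposition is a chain of routine verifications. The only things needing care are fixing the conventions so that the inverses land on the correct side of each product, and isolating the two places where commutativity is indispensable — the well-definedness of the edge-colours of $\swg{\cdot}$ on unordered edges, and the cancellations $\tau_u^{-1}\tau_v^{-1}\tau_u\tau_v=e_\Gamma$ and its analogue for $\hat g$ that make the displayed colours collapse to the intended values.
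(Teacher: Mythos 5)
Your proof is correct, and all the verifications (including the two places where commutativity is essential) check out; the paper itself only cites this proposition from earlier work without reproducing a proof, and your argument is the standard one found in those references.
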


As cycles play an important role in this dichotomy we first determine important facts about switching cycles with Abelian groups. 

We say a graph or subgraph (usually a cycle or spanning tree) is \emph{monochromatic of colour $i$} if all of its edges are of colour $i$.  In the case that a cycle has all edges but one of colour $i$ and one edge of colour $j \neq i$, we say that it is \emph{nearly monochromatic of colours $(i,j)$}. When no confusion arises, we may omit the colours and say the graph or subgraph is monochromatic or nearly-monochromatic. These concepts generalize the situation for signed graphs to $m$-edge-coloured graphs 
(see~\cite{Naserasr_SZ_2021} and the references  within). 

A key step in our polynomial results is testing whether an $m$-edge-coloured graph $G$ can be switched to be monochromatic. We prove the following useful result that the answer is always yes when $G$ is a tree.

\begin{proposition}\label{prop:switchingtree}
    Let $T$ be an $m$-edge-coloured tree and let $\Gamma$ be an $m$-switching group.  Then for any colour $i \in \{1, \dots, m \}$, $T$ maybe switched to be monochromatic of colour $i$.
\end{proposition}

\begin{proof}
    Root $T$ at some vertex $r$. Using a BFS ordering of $V(T)$, say $r=v_1, \dots, v_n$, we switch $v_t$ so that the edge from $v_t$ to its parent is of colour $i$ for $t=2, \dots, n$.
\end{proof}

Note this proposition also applies to spanning trees, i.e., if $T$ is a spanning tree of $G$, then we may switch $G$ so that $T$ is monochromatic of colour $i$.

To prove the following proposition we first require a well-known fact about Abelian group actions (the proof of which we include here for completeness). Suppose $\Gamma$ is an Abelian $m$-switching group.  Let $\pi \in \Gamma$ fix $i \in \{ 1, \dots, m \}$, i.e., $\pi(i) = i$.  For any $j \in \{ 1, \dots, m \}$, by transitivity, there is $\tau \in \Gamma$ such that $\tau(i) = j$.  Then,
\[
\pi(j) = \pi(\tau(i)) = \tau(\pi(i)) = \tau(i) = j.
\]
Thus, if $\pi$ fixes one element of $\{ 1, \dots, m \}$, then $\pi$ fixes all elements of $\{ 1, \dots, m \}$.  The only element of $S_m$ that fixes all of $\{ 1, \dots, m \}$ is the identity $e_{S_{m}}$  (the action of $\Gamma$ is \emph{faithful}).  We record the following fact.

\begin{fact}\label{fact:GammaFree}
If $\Gamma$ is an $m$-switching group such that $\pi(i)=i$ for some $i \in \{1, \dots, m \}$, then $\pi = e_{\Gamma}$.
\end{fact}

Further, using Fact~\ref{fact:GammaFree}, the action of $\Gamma$ has the following property. If for two edge colours $i$ and $j$ we have $\pi(i) = j$ and $\sigma(i)=j$, then $\sigma^{-1}\pi(i) = i$ which implies $\sigma^{-1}\pi = e_{\Gamma}$ or $\pi = \sigma$.  That is, there is exactly one element of $\Gamma$ taking $i$ to $j$.  Such actions are called \emph{regular}.

Armed with Fact~\ref{fact:GammaFree}, we now prove the following result on switching cycles.

\begin{proposition}\label{prop:monocycles}
Suppose $\Gamma$ is an Abelian $m$-switching group and let $C$ be an $m$-edge-coloured cycle.
Then exactly one of the following statements holds:
\begin{list}{(\roman{enumi})}{\usecounter{enumi}}
	\setlength{\itemsep}{0pt}
	\item $C$ has even length and for any colour $i$, $C$ can be switched using $\Gamma$ to be monochromatic of colour $i$;
 	\item $C$ has even length and for any colour $i$, $C$ can be switched using $\Gamma$ to be nearly monochromatic of colours $(i,j)$ for some colour $j$; or
    \item $C$ has odd length and for some colour $j$, $C$ can be switched using $\Gamma$ to be monochromatic of colour $j$.
	\end{list}
\end{proposition}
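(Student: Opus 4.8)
The plan is to translate the problem into linear algebra over a finite abelian group. Since $\Gamma$ is abelian and transitive on $\{1,\dots,m\}$, all point stabilisers coincide: if $\pi(1)=i$ then $\Gamma_i=\pi\Gamma_1\pi^{-1}=\Gamma_1$. Hence $N:=\Gamma_1$ is a normal subgroup fixing every colour, and $A:=\Gamma/N$ acts regularly on $\{1,\dots,m\}$, so $|A|=m$. Choosing colour $1$ as a base point identifies the colour set with the group $A$ (colour $i$ corresponds to the unique $\beta_i\in A$ with $\beta_i(1)=i$), and under this identification switching at a vertex $v$ with respect to $\pi$ replaces each colour $\alpha$ on an edge at $v$ by $(\pi N)+\alpha$, where $A$ is written additively and $\pi N$ ranges over all of $A$ as $\pi$ ranges over $\Gamma$.

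Write $C=v_1v_2\cdots v_nv_1$ with $e_j=v_jv_{j+1}$ carrying colour $\gamma_j\in A$ (indices read modulo $n$). A switch at $v_j$ adds a common element $a\in A$ to $\gamma_{j-1}$ and $\gamma_j$ and fixes the rest, and every $a\in A$ occurs; collecting, the colourings switch-equivalent to $(\gamma_j)_j$ are exactly those of the form $(\gamma_j+b_j+b_{j+1})_j$ with $b_1,\dots,b_n\in A$ arbitrary. Put $\phi(\gamma):=\sum_{j=1}^n(-1)^{j+1}\gamma_j\in A$. The technical core is the claim that $\phi$ is a \emph{complete} switching invariant: if $n$ is even, two colourings of $C$ are switch-equivalent if and only if they have the same $\phi$; if $n$ is odd, they are switch-equivalent if and only if their $\phi$-values are congruent modulo $2A:=\{2a:a\in A\}$. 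I would prove this by solving $x_j=b_j+b_{j+1}$ (where $(x_j)_j$ is the coordinatewise difference of the two colourings) around the cycle: set $b_1=t$, propagate $b_{j+1}=x_j-b_j$, and observe that the loop closes for a suitable $t$ exactly when $\phi(x)\in(1-(-1)^n)A$, which is $\{0\}$ when $n$ is even and $2A$ when $n$ is odd. That $\phi$ is an invariant at all is immediate for even $n$ (a switch alters two coordinates of opposite sign); the only care needed is the wrap-around sign $(-1)^{n+1}$ at the closing edge, and this is precisely what makes the odd case genuinely weaker.

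It then remains to read off the trichotomy. A monochromatic colouring of $C$ of colour $\beta$ has $\phi$-value $\big(\sum_{j=1}^n(-1)^{j+1}\big)\beta$, equal to $0$ when $n$ is even and to $\beta$ when $n$ is odd. First suppose $n$ is even. If $\phi(C)=0$, every monochromatic colouring also has $\phi=0=\phi(C)$, so by completeness $C$ is switch-equivalent to the monochromatic colouring of each colour $i$, giving (i); and (ii) then fails, since a colouring that is $\beta$ on all edges but one has $\phi$-value $\pm(\beta'-\beta)$, where $\beta'$ is the exceptional colour, which is nonzero when $\beta'\ne\beta$. If instead $\phi(C)\ne 0$, then $C$ is switch-equivalent to no monochromatic colouring, but for each colour $i$, letting $j$ be the colour with $\beta_j=\beta_i+\phi(C)$ --- a colour distinct from $i$ since $\phi(C)\ne 0$ and colours correspond bijectively to elements of $A$ --- and placing $\beta_j$ on an appropriately-indexed edge produces a nearly-monochromatic-$(i,j)$ colouring with $\phi$-value $\phi(C)$, hence switch-equivalent to $C$; this is (ii). Now suppose $n$ is odd: taking $j$ to be the colour with $\beta_j=\phi(C)$, the monochromatic colouring of colour $j$ has $\phi$-value $\phi(C)$ and is therefore (a fortiori modulo $2A$) switch-equivalent to $C$, giving (iii), while (i) and (ii) fail because they require even length. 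In every case exactly one of (i), (ii), (iii) holds. The one real obstacle is establishing completeness of the invariant $\phi$; everything else is bookkeeping with signs and indices around the cycle.
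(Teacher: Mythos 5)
Your proof is correct, and it reaches the trichotomy by a genuinely different route from the paper's. The paper argues combinatorially: it first switches along the path $v_1,\dots,v_{n-1}$ to make every edge except $v_0v_{n-1}$ colour $i$ (so every cycle is equivalent to a monochromatic or nearly monochromatic one), then shows (i) and (ii) are mutually exclusive by a direct propagation argument --- if switches $\pi_s$ made a nearly monochromatic even cycle monochromatic, each edge forces $\pi_{s+1}=\pi_s^{-1}$, so the switches alternate between $\pi_0$ and $\pi_0^{-1}$ and the closing edge is left unchanged, a contradiction --- and handles odd cycles by switching at alternate vertices. You instead exploit that a transitive Abelian subgroup of $S_m$ acts regularly (your passage to $A=\Gamma/N$ is harmless but unnecessary, since $\Gamma\subseteq S_m$ is faithful and hence $N$ is trivial), identify colours with group elements, and package the whole switching class into the alternating sum $\phi$, proved to be a complete invariant (exactly for even $n$, modulo $2A$ for odd $n$) by solving $x_j=b_j+b_{j+1}$ around the cycle. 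The paper's mutual-exclusivity argument is essentially the invariance of $\phi$ in disguise, and its path-switching step is the surjectivity half of your completeness claim, so the two proofs rest on the same mechanism; but your formulation buys more, namely a full classification of the $\Gamma$-switching classes of colourings of a cycle (indexed by $A$ for even cycles and by $A/2A$ for odd ones), from which all three cases and their exclusivity are read off uniformly. The trade-off is the extra setup of the colour--group identification, which the paper's more elementary, hands-on argument avoids. I see no gaps: you correctly verify that (ii) fails when $\phi(C)=0$ (no nearly monochromatic colouring has $\phi=0$ on an even cycle), which is exactly what the ``exactly one'' claim requires.
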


\begin{proof}
Let the cycle $C$ have vertices $v_0, v_1, \dots, v_{n-1}$ in the natural order. By Proposition~\ref{prop:switchingtree}, for any colour $i$, we can switch the path $v_1, v_2, \dots, v_{n-1}$ to be monochromatic of colour $i$.  If after this switching, $v_0v_{n-1}$ is also of colour $i$, then $C$ has been switched to be monochromatic.  Otherwise, $v_0v_{n-1}$ is of colour $j$ in which case $C$ is nearly monochromatic of colour $(i,j)$. Thus, (i) or (ii) holds. 

We now show that statements (i) and (ii) are mutually exclusive.  Suppose (after the switching above) $C$ is nearly monochromatic of colours $(i,j)$.
Further suppose to the contrary that it is possible to switch $C$ to be monochromatic. We first observe that if $C$ can be switched to be monochromatic of colour $k$, then by transitivity there is a permutation $\pi(k)=i$ which when applied to $v_0, v_2, v_4, \dots v_{n-2}$ switches to $C$ to be monochromatic of colour $i$.  So it suffices to prove $C$ cannot be switched to be monochromatic of colour $i$.

By Proposition~\ref{prop:atmostonce}, since $\Gamma$ is Abelian, we can assume that at most one switch is applied at each vertex. Let $(\pi_s, v_s)$ be the switch applied to $v_s$ for $0 \leq s \leq n-1$ (including the possibility that $\pi_s = e_{\Gamma}$ when there is no switch). For each edge $v_sv_{s+1}$, $0 \leq s \leq n-2$, we have $\pi_{s+1}\pi_s(i)=i$, and thus $\pi_{s+1}=\pi^{-1}_s$ by Fact~\ref{fact:GammaFree}.  This implies $\pi_0$ is applied at all vertices with an even subscript and $\pi^{-1}_0$ at all vertices with an odd subscript. However this implies the edge $v_0v_{n-1}$ is switched from colour $j$ to colour $i$ by $\pi_0\pi^{-1}_0$, a contradiction. It follows that if $C$ has even length then exactly one of (i) or (ii) holds.

Finally, suppose $C$ is an odd cycle and has been switched so that $v_0v_1, v_1v_2,$ $\dots, v_{n-2}v_{n-1}$ are colour $i$. If $C$ is monochromatic, then we are done. Otherwise, $v_0v_{n-1}$ is colour $j \neq i$. By transitivity there is $\pi \in \Gamma$ satisfying $\pi(i)=j$.  Switching at vertices $v_1, v_3, \dots, v_{n-2}$ with respect to $\pi$ switches $C$ to be monochromatic of colour $j$.

Since statements (i), (ii) and (iii) are mutually exclusive, the result follows.
\end{proof}

We now present the dichotomy result for the Abelian case. Denote by $K_2^i$ the monochromatic $m$-edge-coloured graph of colour $i$ whose underlying graph is $K_2$. The polynomial cases of the dichotomy theorem rely on the following theorem. It was the first non-trivial result which applies to all groups that act transitively on the edge colours.

\begin{theorem}[\cite{bre_kid_mac21, kid21}]
Let $\Gamma$ be an $m$-switching group and $i \in \{1, 2, \ldots, m\}$. 
Then \hhom[$K_2^i$] is polynomial-time solvable.
\label{thm:Arnott}
\end{theorem}

We provide a sketch of the proof here for the case that $\Gamma$ is Abelian. If the given input $m$-edge-coloured graph $G$ is not bipartite, then we have a no-instance.  Otherwise, assume $G$ is a connected, bipartite $m$-edge-coloured graph. Fix a spanning tree $T$ of $G$ and switch $T$ to be monochromatic of colour $i$ using Proposition~\ref{prop:switchingtree}. 

Next we look at the co-tree edges. If each co-tree edge is of colour $i$, then $G$ has be switched to be monochromatic of colour $i$. As a (classical) bipartite graph admits a homomorphism to $K_2$,  $\swhom{G}{K_2^i}$, i.e., $G$ is a yes-instance. Otherwise, some co-tree edge is of colour $j\neq i$ and so $G$ contains a cycle that is $\Gamma$-equivalent to a nearly monochromatic cycle of colour $(i,j)$. By Proposition~\ref{prop:monocycles}(ii), the cycle cannot be switched to be monochromatic of colour $i$ and hence neither can $G$.
Thus $G$ is a no-instance.

Alternatively, we can leverage the switch graph using Proposition~\ref{prop:withoutSwitch}. In the remarks following Fact~\ref{fact:GammaFree}, we noted that the action of $\Gamma$ is regular.  This implies for each vertex of $\swg{K_2^i}$, it is incident with exactly one edge of each colour.
Thus one can test $\swhom{G}{K_2^i}$ by first mapping one vertex $v$ of $G$ to some vertex $(w,e_\Gamma)$ in $\swg{K_2^i}$. 
Each neighbour of $v$ in $G$ must then map to the unique neighbour of $(w,e_\Gamma)$ joined by an edge of the same colour. Continuing, we either construct a homomorphism or obtain a contradiction. This approach avoids the need to switch. 
If the answer is ``no'' then one can trace backwards to find a certificate that no homomorphism exists. (The certificate will in fact be a cycle in $G$ that cannot be switched to be monochromatic c.f. Proposition~\ref{prop:monocycles}.)

The NP-complete cases of the dichotomy theorem use the following results; namely, the indicator construction~\cite{hel08} and a theorem of Barto, Kozig and Niven~\cite{bar08}.

Let $A$ be an $m$-edge-coloured graph with specified vertices $a$ and $b$.
Given an $m$-edge-coloured graph $H$, the \emph{indicator construction with respect to $(A, a, b)$} produces a digraph $H^*$ with vertex set $V(H^*) = V(H)$ and an arc from $x$ to $y$ whenever there is a homomorphism (without switching) of $A$ to $H$ that maps $a$ to $x$ and $b$ to $y$.

\begin{lemma}[\cite{hel08}]
    \hhom[$H^*$] polynomially transforms to \hhom.
    \label{indicator}
\end{lemma}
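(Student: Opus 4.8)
The plan is to exhibit, for every input digraph $G^*$ to \hhom[$H^*$], an $m$-edge coloured graph $G$, computable in polynomial time, with the property that $G^* \to H^*$ if and only if $G \to H$; this is exactly what a polynomial transformation of \hhom[$H^*$] to \hhom\ requires.

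For the construction I would keep $V(G^*)$ as a set of vertices of $G$ and replace each arc by a private copy of the indicator. Concretely, for every arc $(u,v)$ of $G^*$ I take a fresh copy $A_{uv}$ of $A$ in which the vertex playing the role of $a$ is identified with $u$, the vertex playing the role of $b$ is identified with $v$, and all remaining vertices are new and distinct from everything introduced so far; the edges of $A_{uv}$ retain their colours. Since $A$, $a$, $b$ are fixed, the resulting $G$ has $|V(G^*)| + |E(G^*)|\,(|V(A)|-2)$ vertices and at most $|E(G^*)|\,|E(A)|$ edges, so it has size polynomial in $G^*$ and can clearly be produced in polynomial time.

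For correctness, in the forward direction suppose $f : G^* \to H^*$. For each arc $(u,v)$ of $G^*$ the pair $(f(u),f(v))$ is an arc of $H^*$, so by the definition of the indicator construction there is a homomorphism $g_{uv} : A \to H$ with $g_{uv}(a)=f(u)$ and $g_{uv}(b)=f(v)$. I then glue: use $f$ on $V(G^*)$ and $g_{uv}$ on the interior vertices of each $A_{uv}$. This is well defined because the maps agree on the identified vertices $u$ and $v$, and it is a homomorphism $G\to H$ because every edge of $G$ lies inside some $A_{uv}$ and each $g_{uv}$ preserves edges and their colours. Conversely, given $h : G \to H$, the restriction of $h$ to $A_{uv}$ is a homomorphism $A \to H$ sending $a \mapsto h(u)$ and $b \mapsto h(v)$, so by definition of $H^*$ the pair $(h(u),h(v))$ is an arc of $H^*$; hence the restriction $h|_{V(G^*)}$ is a homomorphism $G^* \to H^*$.

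There is no genuine mathematical difficulty here; the only points needing care are bookkeeping. One must keep the interior vertices of the copies $A_{uv}$ pairwise disjoint so that the glued map is well defined, and one should check that $G$ may be taken to be a \emph{simple} $m$-edge coloured graph: this is automatic when $A$ has no edge between $a$ and $b$, and in any case the potential multi-edges created (when $G^*$ has both arcs $(u,v)$ and $(v,u)$, say) are harmless for the purpose of mapping into a fixed target. Once these details are handled, the size bound and polynomial-time computability recorded above complete the argument.
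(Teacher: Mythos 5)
Your proof is correct and is exactly the standard argument for the indicator construction; the paper itself states this lemma as a citation to~\cite{hel08} and gives no proof, and the construction in~\cite{hel08} is the same arc-by-arc replacement with gluing that you describe. The only loose end is that an input digraph $G^*$ with a loop would force you to identify $a$ and $b$ in a copy of $A$ (possibly creating parallel edges), but this is handled by outputting a fixed no-instance whenever $G^*$ has a loop and $H^*$ does not, so it does not affect correctness.
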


A digraph $D$ is called \emph{smooth} if it has no vertex of in-degree 0 and no vertex of out-degree 0.  A dichotomy theorem for homomorphisms to smooth digraphs is known:

\begin{theorem}[\cite{bar08}]
    Let $H$ be a smooth digraph.  If each component of $H$ is homomorphically equivalent to a directed cycle, then \hhom\ is polynomial-time sovable; otherwise, \hhom\ is NP-complete.
    \label{smooth}
\end{theorem}

\begin{corollary}
    Let $H$ be a smooth digraph with no loops.  If some component of $H$ contains directed cycles whose lengths are relatively prime, then \hhom\ is NP-complete.
    \label{smoothcor}
\end{corollary}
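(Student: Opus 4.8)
The plan is to derive this directly from Theorem~\ref{smooth}: under the stated hypotheses I will show that $H$ is \emph{not} homomorphically equivalent to a shortest directed cycle, so that the NP-completeness alternative of Theorem~\ref{smooth} applies. First I would observe that, since $H$ is smooth, it contains at least one directed cycle (iteratively follow out-arcs in the finite vertex set until a vertex repeats), so a shortest directed cycle $C$ exists; write $k$ for its length. Because $H$ has no loops, a directed cycle of length $1$ is impossible, so $k \geq 2$.

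The core step is then a short argument by contradiction. Suppose $H$ is homomorphically equivalent to $C$, and fix a homomorphism $\varphi : H \to C$. Let $K$ be a component of $H$ containing directed cycles $C_1$ and $C_2$ of lengths $\ell_1$ and $\ell_2$ with $\gcd(\ell_1,\ell_2)=1$. The key (elementary) observation is that the directed $k$-cycle admits a closed directed walk of length $\ell$ only when $k \mid \ell$: each step along a directed $k$-cycle advances the vertex index by $1$ modulo $k$, so returning to the starting vertex after $\ell$ steps forces $\ell \equiv 0 \pmod{k}$. Applying $\varphi$ to $C_1$ and to $C_2$ produces closed directed walks of lengths $\ell_1$ and $\ell_2$ in $C$, hence $k \mid \ell_1$ and $k \mid \ell_2$, so $k \mid \gcd(\ell_1,\ell_2)=1$, giving $k=1$, contradicting $k \geq 2$. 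Thus $H \not\to C$ up to homomorphic equivalence, and Theorem~\ref{smooth} yields that \hhom\ is NP-complete.

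I do not expect a genuine obstacle here, since this is a quick corollary of Theorem~\ref{smooth}; the points needing a moment's care are (a) confirming that a shortest directed cycle is well defined, which smoothness handles; (b) using the loopless hypothesis in precisely the right place, namely to exclude $k=1$; and (c) noting that the cycles $C_1, C_2$ need not be induced, disjoint, or even in the same component as $C$ — all that is used is that each is a closed directed walk in $H$ and that $\varphi$ carries it to a closed directed walk of the same length in $C$. Disconnectedness of $H$ is likewise harmless, since $\varphi$ is defined on all of $H$ and we only restrict it to the component $K$.
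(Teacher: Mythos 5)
Your derivation is correct and is evidently the intended one: the statement is posed as an unproved corollary of Theorem~\ref{smooth}, and your argument --- a homomorphism of $H$ to a directed $k$-cycle sends every closed directed walk to a closed directed walk, forcing $k$ to divide both cycle lengths and hence $k=1$, which looplessness excludes --- is the standard two-line route. The only remark worth adding is that your proof never invokes the same-component hypothesis; that hypothesis is what makes the corollary safe for disconnected $H$ (where Theorem~\ref{smooth} as literally stated must be read componentwise, since e.g.\ the disjoint union of a directed $2$-cycle and a directed $3$-cycle has a polynomial-time \hhom\ problem), but in the paper's application the two cycles $C_1$ and $C_2$ share a vertex, so nothing is lost.
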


 We now state and prove the main result of this section.

\begin{theorem}\label{thm:abelianP}
Let $\Gamma$ be an Abelian $m$-switching group.  For a fixed $m$-edge-coloured graph $H$, if
$H$ is $\Gamma$-homomorphically equivalent to a monochromatic bipartite $m$-edge-coloured graph,
then \swhhom{$H$} is polynomial-time solvable. 
Otherwise, \swhhom{$H$} is NP-complete.
\end{theorem}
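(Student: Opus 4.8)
The plan is to prove the two halves separately. The polynomial half is essentially a consequence of Theorem~\ref{thm:Arnott}: if $H$ is $\Gamma$-homomorphically equivalent to a monochromatic bipartite $m$-edge coloured graph $M$, then either $M$ has no edge, in which case $M$ is homomorphically equivalent to $K_1$ and \swhhom{$H$} merely asks whether the input has no edge, or every edge of $M$ has a single colour $i$ and, being bipartite, $M$ is ordinarily---hence $\Gamma$---homomorphically equivalent to $K_2^i$. Since $\Gamma$-homomorphisms compose, $H$ is then $\Gamma$-homomorphically equivalent to $K_2^i$, so $\swhom{G}{H}$ holds exactly when $\swhom{G}{K_2^i}$ does; hence \swhhom{$H$} has the same yes-instances as \hhom[$K_2^i$], which is polynomial-time solvable by Theorem~\ref{thm:Arnott}.

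For the NP-complete half, assume $H$ is not $\Gamma$-homomorphically equivalent to any monochromatic graph. I would first note that this is equivalent to $H$ not being switchable to a monochromatic graph: a $\Gamma$-homomorphism of a graph to a monochromatic graph forces the source to be switch-equivalent to a monochromatic graph, and switch-equivalent graphs are $\Gamma$-homomorphically equivalent. In particular $m\geq 2$ (for $m=1$ every graph is monochromatic) and $H$ has an edge, and since isolated vertices of $H$ are irrelevant for connected inputs with an edge we may assume $H$ has none. Membership in NP is clear, since by Proposition~\ref{prop:withoutSwitch} the problem \swhhom{$H$} coincides with the fixed-target problem \hhom[$\swg{H}$]. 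For NP-hardness I would use the indicator construction: pick an $m$-edge coloured graph $A$ with specified vertices $a, b$ (chosen below), and let $D$ be the digraph obtained by applying the indicator construction with respect to $(A,a,b)$ to the target $\swg{H}$. By Lemma~\ref{indicator} and Proposition~\ref{prop:withoutSwitch}, \hhom[$D$] polynomially transforms to \hhom[$\swg{H}$] $=$ \swhhom{$H$}, so it suffices to choose $(A,a,b)$ making $D$ smooth, loopless, and such that some component of $D$ contains directed cycles of relatively prime lengths; Corollary~\ref{smoothcor} then makes \hhom[$D$] NP-complete, hence \swhhom{$H$} NP-hard, and combined with membership in NP this gives NP-completeness.

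Since $\Gamma$ is transitive and Abelian it is regular, so every vertex of $\swg{H}$ meets an edge of each colour; as $\swg{H}$ is simple with no isolated vertices, this makes $D$ smooth and loopless for the gadgets I have in mind, and only the coprime-cycles condition needs work. I would split into two subcases. If $\swhom{C}{H}$ holds for some odd cycle $C$ that is monochromatic of some colour $i$, take $A$ to be a single edge of colour $i$; then $D$ is the colour-$i$ subgraph of $\swg{H}$ with each edge replaced by a pair of opposite arcs, $\swhom{C}{H}$ becomes (via Proposition~\ref{prop:withoutSwitch}) a closed walk of odd length in that subgraph, and so some component of $D$ contains a directed cycle of odd length together with directed $2$-cycles, which have coprime lengths. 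Otherwise no monochromatic odd cycle admits a $\Gamma$-homomorphism to $H$, so by Proposition~\ref{prop:monocycles} no cycle of $H$ switches to a monochromatic odd cycle; since $H$ is not switchable to a monochromatic graph, by Proposition~\ref{prop:monocycles} some component of $H$ contains an even cycle $C$ that switches to be nearly monochromatic of colours $(i,j)$ for some $i\neq j$ but never monochromatic. I would then take $A$ to be the path on three vertices $a, c, b$ with $ac$ of colour $i$ and $cb$ of colour $j$, so that directed cycles of length $t$ in $D$ correspond to closed walks of $\swg{H}$ whose colour sequence is $(i,j)$ repeated $t$ times, and would produce two such closed walks of coprime lengths in one component of $D$---concretely, directed cycles of lengths $2$ and $3$---using the closed walk of $\swg{H}$ coming from $C$ via Proposition~\ref{prop:withoutSwitch} together with the regularity of $\Gamma$.

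I expect the main obstacle to be exactly this last step: producing, in a single component of the indicator digraph, directed cycles of relatively prime lengths in the second subcase. This means converting the hypothesis ``$H$ is not $\Gamma$-homomorphically equivalent to (equivalently, switchable to) a monochromatic graph'' into concrete structure through the trichotomy of Proposition~\ref{prop:monocycles}, and then choosing the gadget $(A,a,b)$ finely enough that its repeated colour pattern is realised along exactly the closed walks of $\swg{H}$ that witness that structure, all inside one component. The polynomial half, membership in NP, and the smoothness and looplessness of $D$ should by contrast be routine, the last two resting on the regularity of a transitive Abelian $\Gamma$.
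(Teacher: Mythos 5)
Your polynomial case, your NP-membership argument, your reduction framework (indicator construction on $\swg{H}$ plus Corollary~\ref{smoothcor}), your handling of the non-bipartite subcase, and even your choice of gadget $(A,a,b)$ in the bipartite subcase all match the paper. The genuine gap is exactly the step you flag as the main obstacle, and the placeholder you offer for it does not work: you cannot in general find directed cycles of lengths $2$ and $3$ in a component of the indicator digraph $D$. A directed $2$-cycle in $D$ corresponds to a closed walk of length $4$ in $\swg{H}$ with colour pattern $i,j,i,j$. Take $\Gamma = \mathbb{Z}_3$ acting cyclically on $\{1,2,3\}$ and let $H$ be a $6$-cycle with five edges of colour $1$ and one of colour $2$ (so $i=1$, $j=2$). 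Every closed walk of length $4$ in the underlying $6$-cycle backtracks, and a short computation with the regularity of $\Gamma$ (writing the colour of the edge $(u,\alpha)(w,\beta)$ as $\alpha + \beta + c_{uw}$ in additive notation) shows that the alternating sum condition forced by a closed lift is $-2 \equiv 1 \pmod 3$, never $0$; so $D$ has no directed $2$-cycle at all. More generally, the shortest $i,j$-alternating closed walk of $\swg{H}$ sitting over a single edge of $H$ has length $2d$, where $d$ is the order of the unique $\pi \in \Gamma$ with $\pi(i) = j$, so small cycle lengths are simply unavailable when $d$ is large.

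What the paper does instead is construct the coprime cycle lengths explicitly from the structure you have already isolated: a nearly monochromatic even cycle $v_0,\dots,v_{2k-1}$ of colours $(i,j)$. Over a single colour-$i$ edge $v_0v_1$ it builds an alternating cycle $C_1 = (v_0,e_\Gamma),(v_1,e_\Gamma),(v_0,\pi),(v_1,\pi^{d-1}),\dots,(v_0,\pi^{d-1}),(v_1,\pi),(v_0,e_\Gamma)$ of length $2d$, and by threading a similar $\pi$-power pattern along the whole cycle (the paths $P_t$) it builds an alternating cycle $C_2$ of length $2d(k-1)+2$ through $(v_0,e_\Gamma)$ and $(v_{2k-1},e_\Gamma)$, using the colour-$j$ co-tree edge once. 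These yield directed cycles of lengths $d$ and $d(k-1)+1$ in one component of $D$, which are coprime since $d(k-1)+1 \equiv 1 \pmod{d}$. Without some construction of this kind, your reduction is incomplete; everything else in your write-up is sound and agrees with the paper's argument.
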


\begin{proof}

The $m$-edge-coloured graph $H$ is $\Gamma$-homomorphically equivalent to a monochromatic bipartite graph if and only if $H$ has a \swhomom\ to a monochromatic $K_1$ or a monochromatic $K_2$, the former case occurring when $H$ has no edges.  Thus, testing if some input $m$-edge-coloured graph $G$ admits a \swhomom\ to $H$ is reduced to the cases when $H$ is a $K_1$ or $K_2^i$ for some $i$.  
The case for $K_1$  is easy to test, simply verify $G$ has no edges.  
In the case that $H = K_2^i$, \swhhom{$H$} is polynomial-time solvable by Theorem \ref{thm:Arnott}. 

Now suppose $H$ is $\Gamma$-homomorphically equivalent to neither a 
monochromatic $K_1$ nor a monochromatic $K_2$.
Thus either $H$  is bipartite and has some component that cannot be switched to be monochromatic, or $H$ (has a component that) is not bipartite.  We note as $H$ is homomorphically equivalent to the subgraph induced by non-trivial components (and in this case we must have non-trivial components) we may assume without loss of generality that $H$ has only non-trivial components.

Suppose $H$ is bipartite and some component $H_1$ cannot be switched to be monochromatic. Choose a spanning tree $T$ of $H_1$.  By Proposition~\ref{prop:switchingtree} there is a sequence of switches $\Sigma$ so that in $H_1^\Sigma$ every edge of $T$ has colour $i$. Since $H_1$ cannot be switched to be monochromatic, there must be some co-tree edge of colour $j \neq i$. Let $C$ be a nearly monochromatic cycle in $H_1^\Sigma$ on vertices $v_0, v_1, \dots, v_{2k-1}$ with all edges in $T$ except the co-tree edge $v_0 v_{2k-1}$, which has colour $j \neq i$.  

For ease of notation denote $\swg{H^\Sigma}$ by $SH$.  
Let $A$ be the $2$-edge coloured graph consisting of a path of length $2$ with end points $a$ and $b$ with the edge incident with $a$ of colour $i$ and the edge incident with $b$ of colour $j$. Let $SH^*$ be the result of applying the indicator construction with respect to $(A,a,b)$ to $SH$. By Lemma~\ref{indicator}, we need to show the \hhom[$SH^*$]\ problem is NP-complete. This is accomplished by proving $SH^*$ is smooth, and some component has directed cycles of relatively prime lengths.

\begin{claim}
	The digraph $SH^*$ is smooth.
\end{claim}

\begin{claimpf}\claimproof
As each component of $SH$ is non-trivial, each vertex $u$ in $H$ is incident with at least one edge.
Then each vertex $(u,\pi)$ in $SH$ is incident with an edge $(u,\pi)(w,\pi)$ of some colour, say $c$.  Since $\Gamma$ acts transitively, there is $\tau \in \Gamma$ such that $\tau(c) = i$, or equivalently the edge $(u,\pi)(w,\tau\pi)$ is of colour $i$.  Similarly the vertex $(w,\tau\pi)$ is incident with an edge of colour $j$, say $(w,\tau\pi)(v,\sigma)$. Hence there exists a homomorphism from $A$ to $SH$ where $a$ is mapped to $(u,\pi)$ and $b$ is mapped to $(v, \sigma)$. We conclude that in $SH^*$, the vertex $(u,\pi)$ is the tail of an arc.  Using similar reasoning, the vertex $(u,\pi)$ is also the head of an arc in $SH^*$.  This proves the claim.
\end{claimpf}

\begin{claim}
	The digraph $SH^*$ has a component containing directed cycles of relatively prime lengths.
\end{claim}

\begin{claimpf}\claimproof
Recall, $SH$ contains a copy of $H^\Sigma$ and thus by hypothesis, there is a component of $SH$ containing the nearly monochromatic cycle $C$ in $H_1^{\Sigma}$.
\[
(v_0, e_\Gamma), (v_1, e_\Gamma), \dots, (v_{2k-1}, e_\Gamma), (v_0, e_\Gamma)\]
with all edges of colour $i$ except the edge $(v_0, e_\Gamma)(v_{2k-1}, e_\Gamma)$ which is of colour $j$.  
	
Since $\Gamma$ acts transitively, there exists $\pi \in \Gamma$ such that $\pi(i) = j$. Let the order of $\pi$ be $d$, that is, $\pi^d = e_{\Gamma}$. Observe that the edge $(v_0, \pi^s)(v_1, \pi^t)$ will be of colour $i$ when $s+t \equiv 0 \pmod{d}$ and will be of colour $j$ when $s+t \equiv 1 \pmod{d}$.
	
In $SH$ the cycle
\begin{eqnarray*}
	C_1 & = & (v_0,e_\Gamma), (v_1,e_\Gamma), (v_0,\pi), (v_1,\pi^{d-1}), (v_0,\pi^2), (v_1,\pi^{d-2}), \\
	& &  \ldots, (v_0,\pi^\ell), (v_1,\pi^{d-\ell}), \ldots, ( v_0,\pi^{d-1}), (v_1,\pi), (v_0,e_\Gamma)
\end{eqnarray*} 
is a cycle of length $2d$ whose edges alternate colour $i$ and colour $j$.  
An example is shown in Figure~\ref{fig:littlecycle}.
\begin{figure}[ht]%
	\centering
	
	\includegraphics[width=0.95\textwidth]{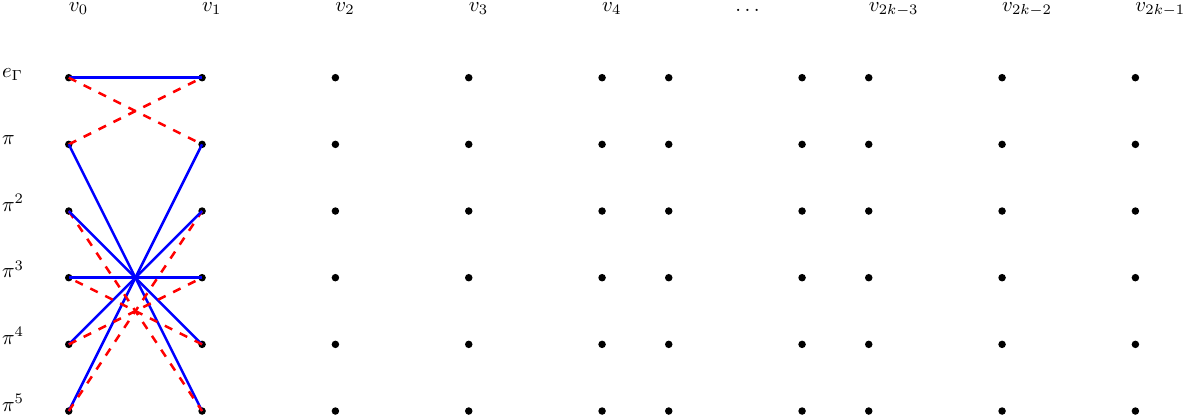}
	
	\caption{An $(i,j)$-alternating cycle of length $12$ in $SH$ where $\pi$ has order $6$.}\label{fig:littlecycle}
\end{figure}

For $r$ odd and $1\leq r \leq 2k-3$, let $P_r$ be the path given by
\begin{eqnarray*}
    P_r &=& (v_r,e_\Gamma), (v_{r+1},\pi), (v_r,\pi^{d-1}), \ldots, (v_{r+1},\pi^\ell), (v_r,\pi^{d-\ell}),\\ 
	& &  \ldots, ( v_{r+1},\pi^{d-1}), (v_r,\pi), (v_{r+1}, e_\Gamma), (v_{r+2}, e_\Gamma).
\end{eqnarray*}

This path has length $2d$ and edges that alternate colour $i$ and colour $j$. And so, the cycle
\begin{eqnarray*}
    C_2 &=& (v_0,e_\Gamma), P_1, P_3, \ldots, P_{2k-3}, (v_{2k-1},e_\Gamma), (v_0,e_\Gamma)
\end{eqnarray*}
consists of $k-1$ paths of length $2d$ and the path $(v_{2k-1},e_\Gamma),(v_0,e_\Gamma)$ of length $2$. Hence, $C_2$ is a cycle of length $(2d)(k-1) + 2$ whose edges alternate colour $i$ and colour $j$.  An example is shown in Figure~\ref{fig:bigcycle}.

\begin{figure}[ht]%
	\centering
	
	\includegraphics[width=0.95\textwidth]{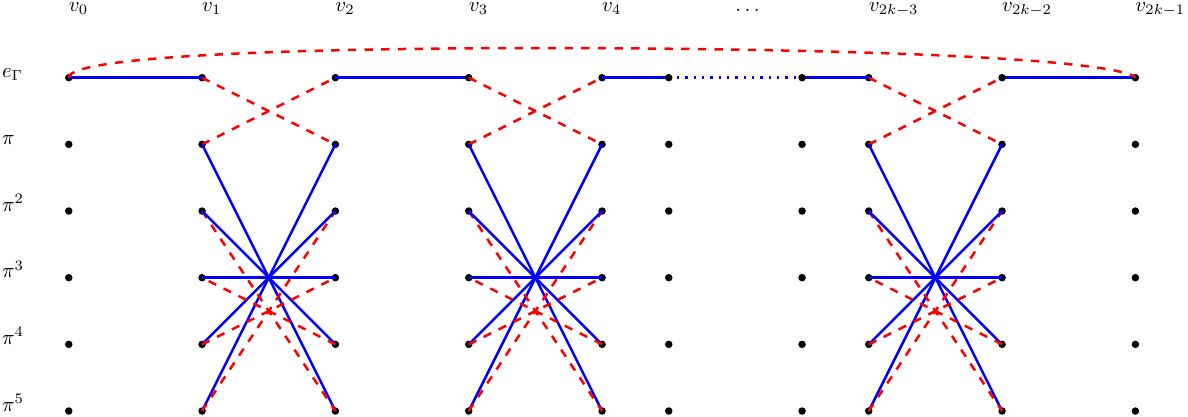}
	
	\caption{An $(i,j)$-alternating cycle of length $12(k-1)+2$ in $\swg{H}$ where $\pi$ has order $6$.}\label{fig:bigcycle}
\end{figure}

The cycles $C_1$ and $C_2$ show that $SH^*$ has a component containing directed cycles of relatively prime lengths $d$ and $d(k-1)+1$ (on the vertices of $C_1$ and $C_2$ at even 
distance along the cycle from $(v_0,e_\Gamma)$), which completes the proof of the claim.
\end{claimpf}
	
As $SH$ does not contain any cycles of length $1$ or $2$, we have $SH^*$ is loop-free.  Thus \hhom[$SH^*$] is NP-complete by Corollary \ref{smoothcor}, and
hence \hhom[$SH$] is NP-complete.

Finally, suppose $H$ is not bipartite.  Thus it contains an odd cycle $C$.
By Proposition~\ref{prop:monocycles}(iii), $H$ is $\Gamma$-switch equivalent to a graph with a monochromatic odd cycle. Thus, $\swg{H}$ contains a monochromatic odd cycle, say of colour $i$.  By~\cite{hel90}, the \hhom[$\swg{H}$]\ problem is NP-complete by restricting the inputs to colour $i$.  By Proposition~\ref{prop:withoutSwitch}, \swhhom{$H$}\ is NP-complete.
  
This completes the proof of the theorem.
\end{proof}
	
\section{Group Abelianization}\label{sec:groupab}

We now turn our attention to non-Abelian $m$-switching groups. Some special cases have been considered in earlier work \cite{mactre21}. 

Let $H$ be an $m$-edge-coloured graph and let $\Gamma$ be a  $m$-switching group. In this section we show there is a set of $m'$ colours,  a $m'$-edge coloured graph $H'$ with the same underlying graph as $H$, and an Abelian $m'$-switching group $\Gamma'$ such that \swhhom[$\Gamma'$]{$H'$} and \swhhom{$H$}\ are polynomially equivalent.

We recall some group theory terminology. For $\pi,\phi\in \Gamma$, the element $[\pi, \phi] = \pi\phi\pi^{-1}\phi^{-1}$ is a \emph{commutator} of $\Gamma$. The \emph{commutator subgroup of $\Gamma$}, denoted $[\Gamma, \Gamma]$, is the subgroup generated by the commutators of $\Gamma$.  We now show how commutators can be used to generate a sequence of switches that allow the colour of a single edge to be changed.
	
\begin{lemma}\label{lem:switchone}
Let $H$ be an $m$-edge-coloured graph and $\Gamma$ be an $m$-switching group.  For any $\tau \in [\Gamma, \Gamma]$ and any edge $uw$ in $H$ of colour $i$, there is a sequence $\Sigma$ of switches such that in $H^\Sigma$ the edge $uw$ has colour $\tau(i)$ and all other edges in $H$ and $H^\Sigma$ have the same colour.
\end{lemma}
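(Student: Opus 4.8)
The plan is to reduce the claim, via the usual "switch a path" trick, to the case where $\tau$ is a single commutator $[\pi,\phi]=\pi\phi\pi^{-1}\phi^{-1}$, and then to exhibit explicitly a sequence of switches, all applied at the two endpoints $u$ and $w$ (and possibly one auxiliary vertex), whose net effect is to apply $\tau$ to the edge $uw$ while cancelling out on every other edge. The key observation is that switching twice at the \emph{same} vertex $v$, first with respect to $\alpha$ and then with respect to $\beta$, has the combined effect of switching once with respect to $\beta\alpha$; in particular, switching at $v$ with respect to $\alpha$ and then with respect to $\alpha^{-1}$ restores all edges at $v$. So any sequence of switches confined to $\{u,w\}$ changes only the colour of $uw$: every other edge incident with $u$ (resp.\ $w$) gets hit by the product of the permutations used at $u$ (resp.\ $w$), and if each of those products is the identity, only $uw$ — which is acted on by \emph{both} products, composed — is altered. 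Wait, I need to be careful: if the net permutation at $u$ is the identity and the net permutation at $w$ is the identity, then $uw$ is also unchanged. So the trick must be more subtle: the switches at $u$ and at $w$ must be interleaved so that neither endpoint's switches commute past each other freely, but the "other edges" constraint still forces cancellation. The right framework is: process the sequence and track, for each edge, the ordered product of switch-permutations applied to it.

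Concretely, for a single commutator I would use the sequence
$$
\Sigma \;=\; (u,\pi),\,(w,\pi^{-1}),\,(u,\phi),\,(w,\phi^{-1}),\,(u,\pi^{-1}),\,(w,\pi),\,(u,\phi^{-1}),\,(w,\phi).
$$
The net permutation seen at $u$ is $\phi\,\pi^{-1}\,\phi\,\pi$ applied in sequence, i.e.\ the composition; computed in the appropriate order this need not be the identity, so this exact sequence is not quite right either and I would instead interleave so that the \emph{products at $u$ and at $w$ individually telescope to $e_\Gamma$} while the edge $uw$, which alternately feels a switch at $u$ and a switch at $w$, accumulates $\pi\phi\pi^{-1}\phi^{-1}$. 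The clean way: apply $(u,\pi)$, then $(w,\phi)$, then $(u,\pi^{-1})$, then $(w,\phi^{-1})$ — now the product at $u$ is $\pi^{-1}\pi = e_\Gamma$, the product at $w$ is $\phi^{-1}\phi = e_\Gamma$, so all edges except $uw$ are restored; meanwhile $uw$, assuming the switches at $u$ are composed on the left and those at $w$ commute with them as colour permutations acting on the single shared colour slot, ends up with colour $\pi\phi\pi^{-1}\phi^{-1}(i)$ (the switch operations at $u$ and at $w$ do \emph{not} commute as operations on the edge $uw$ because each re-colours the edge before the next acts). Verifying that the order of composition works out to exactly $[\pi,\phi]$ rather than, say, $[\phi,\pi]$ or $\phi^{-1}\pi^{-1}\phi\pi$ is the delicate bookkeeping step, and I expect that to be the main obstacle — it is a matter of pinning down the convention for how a sequence of switches at the two endpoints composes on the colour of the shared edge.

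Having handled a single commutator, I would extend to an arbitrary $\tau\in[\Gamma,\Gamma]$ by writing $\tau = c_1 c_2 \cdots c_r$ as a product of commutators (possibly of inverses of commutators, but inverses of commutators are commutators, and a product of commutators is what generates $[\Gamma,\Gamma]$), and concatenating the corresponding sequences $\Sigma_1,\dots,\Sigma_r$ for $c_1,\dots,c_r$ in the right order: after $\Sigma_r$ the edge $uw$ has colour $c_1\cdots c_r(i)=\tau(i)$ and every other edge has been returned to its original colour after each $\Sigma_j$, hence is unchanged overall. The only remaining care is again the order of composition — making sure the concatenation realizes the product in the intended order, not the reverse — which is the same bookkeeping issue as above and is resolved by the same convention. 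Aside from that, the argument is elementary: it uses only that $\Gamma$ acts on the colours, that switching at $v$ with $\alpha$ then $\beta$ equals switching at $v$ with the composite, and that switches at $u$ affect no edge not incident with $u$.
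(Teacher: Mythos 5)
Your proposal is correct and follows essentially the same route as the paper: decompose $\tau$ into a product of commutators, realize each commutator by an interleaved four-switch sequence at $u$ and $w$ whose net product at each vertex individually is $e_\Gamma$ (so only the shared edge $uw$, which feels the switches in alternation, is changed), and concatenate. The ``delicate bookkeeping'' you flag is real but harmless — your sequence $(u,\pi),(w,\phi),(u,\pi^{-1}),(w,\phi^{-1})$ produces $\phi^{-1}\pi^{-1}\phi\pi=[\phi^{-1},\pi^{-1}]$ on $uw$, and since every commutator arises this way for a suitable choice of $\pi,\phi$, the argument goes through exactly as in the paper (which uses the reversed ordering to land on $[\pi,\phi]$ directly).
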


\begin{proof}
Since $\tau \in [\Gamma, \Gamma]$, we have $\tau = [\pi_1, \phi_1] \cdots [\pi_k, \phi_k]$.  Let
$$
H^\Sigma = H^{(w, \phi_k^{-1})(u, \pi_k^{-1})(w,\phi_k)(u,\pi_k) \cdots (w,\phi_1^{-1})(u,\pi_1^{-1})(w, \phi_1)(u, \pi_1)}.
$$
The switches at $u$ are the product $\pi_k^{-1} \pi_k \cdots \pi_1^{-1} \pi_1$ and at $w$ are $\phi_k^{-1} \phi_k \cdots \phi_1^{-1} \phi_1$.  Thus each edge other that $uw$ has the same colour in $H$ and $H^\Sigma$.  The edge $uw$, on the other hand, has colour $\tau(i)$ as required. 
\end{proof}

It is well known that $[\Gamma, \Gamma]$ is the smallest normal subgroup of $\Gamma$ such that the quotient group $\Gamma/{[\Gamma,\Gamma]}$
is Abelian (see \cite{Gallian_2017}, Chapter 9, exercise 62).  
The orbits of $[\Gamma, \Gamma]$  partition  $\{ 1, 2, \dots, m \}$ as $D = \{\Delta(1), \ldots, \Delta(m)\}$, where $j \in \Delta(i)$ if there is $\tau \in [\Gamma, \Gamma]$ such that $\tau(i) = j$.  Note that $\Delta(i) = \Delta(j)$ if and only if $j \in \Delta(i)$. 
The partition $D$ is a block system for $\Gamma$ (see \cite{dix96}, Theorem 1.6A). 

Define an action of $\Gamma/{[\Gamma,\Gamma]}$ on $D$ by $g[\Gamma,\Gamma](\Delta) = g(\Delta)$, for $\Delta \in D$.  
This action is well defined because $h(\Delta) = \Delta$ for every $h \in [\Gamma,\Gamma]$ and $\Delta \in D$.
Since $\Gamma$ acts transitively on $\{ 1, 2, \dots, m \}$, it follows that the quotient group acts transitively on $D$.

Let $m'$ be the number of distinct blocks in $D$. 
In the sequel we will regard these as edge colours.
 
Given an $m$-edge-coloured graph $G$, we define the \emph{Abelianization of $G$ with respect to $\Gamma$}, denoted $G^{\Gamma}$, to be the $m'$-edge coloured graph obtained from $G$ by replacing each edge of colour $i$ with an edge of colour $\Delta(i)$.  

The quotient group $\Gamma/{[\Gamma,\Gamma]}$ acts transitively on the set of $m'$ colours.
It may not be a $m'$-switching group because it may have several permutations that act identically on the block system. 
To see this, consider $\Gamma = S_m$.
Then $[\Gamma,\Gamma] = A_m$ and $\Gamma/{[\Gamma,\Gamma]} = S_2$.  But in this case $m' = 1$ (since 
$A_m$ acts transitively), hence the quotient group is 
not a $m'$-switching group as it is not a subgroup of $S_{m'}$.
However, both elements of the quotient group act identically on the block system.

Let $\Gamma^{ab}$ be the subgroup of $S_{m'}$ which is isomorphic to the subgroup of $\Gamma/{[\Gamma,\Gamma]}$
arising from defining two elements of $\Gamma/{[\Gamma,\Gamma]}$ to be equivalent if they act
identically on the block system $D$.
Then $\Gamma^{ab}$ is a $m'$-switching group.
Since $\Gamma/{[\Gamma,\Gamma]}$ is Abelian, so is $\Gamma^{ab}$.
We call $\Gamma^{ab}$ the \emph{Abelianization of} $\Gamma$.

We now define switching on $G^\Gamma$ with respect to $\Gamma^{ab}$.
Recall that each $\pi' \in \Gamma^{ab}$ corresponds to an equivalence class of cosets of the form $\pi [\Gamma, \Gamma]$.  To switch $G^\Gamma$ at $u$ with respect to $\pi'$ we apply $\pi \in \Gamma$ to each of the edges incident with $u$ as follows: an edge $uv$ of colour $\Delta(i)$ in $G^{\Gamma}$ switches to colour $\Delta(\pi(i))$.  We claim that such switching is well defined in that for any element in the coset $\pi[\Gamma, \Gamma]$ and any colour $j \in \Delta(i)$, the resulting colour of $uv$ in $G^\Gamma$ is the same.  To see this, given $j \in \Delta(i)$ there is $\tau \in [\Gamma, \Gamma]$ such that $\tau(i) = j$.  Then $\pi(j) = \pi(\tau(i))$.  Since $[\Gamma, \Gamma]$ is a normal subgroup, $\pi\tau = \tau'\pi$ for some $\tau' \in [\Gamma, \Gamma]$.  Thus $\pi(j) = \tau'(\pi(i))$ giving $\pi(j) \in \Delta(\pi(i))$ as required.  Similarly, we can argue $\phi(i) \in \Delta(\pi(i))$ for any $\phi \in \pi[\Gamma, \Gamma]$ (or any other coset of $[\Gamma,\Gamma]$ with the same action on $D$).  This proves the claim.
As a result, if $\pi' = \pi[\Gamma, \Gamma]$, we can write $\pi'(\Delta(i)) = \Delta(\pi(i))$.

We note two special cases of interest.  If $\Gamma$ is Abelian, then $[ \Gamma, \Gamma] = \{ e_{\Gamma} \}$ and each block is a singleton, i.e., $\Delta(i) = \{ i \}$.  Hence, $\Gamma = \Gamma^{ab}$ and the action of the two groups is the same.  Conversely, if $[ \Gamma, \Gamma ]$ acts transitively on $\{ 1, 2, \dots, m \}$, then the block system consists of a single block, i.e. $\Delta(i) = \{ 1, 2, \dots, m \}$ for all $i$.  In this case for a given $G$, the $m$-edge-coloured graph $G^\Gamma$ has a single edge colour (and is invariant under switching by $\Gamma^{ab}$).  By Lemma~\ref{lem:switchone} it is possible to switch each edge of $G$ individually to any colour of $\{ 1, 2, \dots, m \}$ implying all edge colourings of $G$ are \sweq.  See \cite{mactre21} for examples.
	
\begin{theorem}\label{thm:switchAbel}
Let $G$ and $H$ be $m$-edge-coloured graphs and $\Gamma$ be an $m$-switching group. Then $\swequiv{G}{H}$ if and only if $\swequiv[\Gamma^{ab}]{G^{\Gamma}}{H^{\Gamma}}$.
\end{theorem}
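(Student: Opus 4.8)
The plan is to prove both implications by pushing switching sequences back and forth along the surjective group homomorphism $q \colon \Gamma \to \Gamma^{ab}$, namely the composite of the canonical projection $\Gamma \to \Gamma/[\Gamma,\Gamma]$ with the quotient $\Gamma/[\Gamma,\Gamma] \to \Gamma^{ab}$ that identifies cosets acting identically on the block system $D$. The forward direction will be a direct transport of sequences; the converse will also use such a transport, but then needs Lemma~\ref{lem:switchone} to repair the ambiguity that a single coset of $[\Gamma,\Gamma]$ may be represented by several elements of $\Gamma$.

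First I would record the elementary \emph{commuting-square} fact: for any vertex $v$, any $\pi \in \Gamma$, and $\pi' = q(\pi)$,
$$
\left(G^{(v,\pi)}\right)^{\Gamma} = \left(G^{\Gamma}\right)^{(v,\pi')}.
$$
This is immediate from the relation $\pi'(\Delta(i)) = \Delta(\pi(i))$ established just before the theorem: an edge $uv$ of colour $i$ in $G$ becomes colour $\pi(i)$ after switching, hence colour $\Delta(\pi(i))$ after Abelianizing; while in $G^{\Gamma}$ that edge has colour $\Delta(i)$, which switching at $v$ by $\pi'$ sends to $\pi'(\Delta(i)) = \Delta(\pi(i))$. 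Edges not incident with $v$ are unaffected on both sides. Iterating over a switching sequence $\Sigma = (v_1,\pi_1),\dots,(v_k,\pi_k)$ and setting $q(\Sigma) = (v_1,q(\pi_1)),\dots,(v_k,q(\pi_k))$ yields $(G^{\Sigma})^{\Gamma} = (G^{\Gamma})^{q(\Sigma)}$. The forward implication then drops out: if $\swequiv{G}{H}$, say $H = G^{\Sigma}$, then $H^{\Gamma} = (G^{\Gamma})^{q(\Sigma)}$, so $\swequiv[\Gamma^{ab}]{G^{\Gamma}}{H^{\Gamma}}$.

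For the converse, suppose $H^{\Gamma} = (G^{\Gamma})^{\Sigma'}$ for some $\Sigma' = (v_1,\pi_1'),\dots,(v_k,\pi_k')$ with each $\pi_j' \in \Gamma^{ab}$. Since $q$ is surjective, I would choose $\pi_j \in \Gamma$ with $q(\pi_j) = \pi_j'$ and set $\Sigma = (v_1,\pi_1),\dots,(v_k,\pi_k)$. By the commuting square, $(G^{\Sigma})^{\Gamma} = (G^{\Gamma})^{\Sigma'} = H^{\Gamma}$; equivalently, $G^{\Sigma}$ and $H$ share the same underlying graph and, on each edge, carry colours lying in a common orbit of $[\Gamma,\Gamma]$. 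Now I would correct the edges one at a time: whenever an edge $uw$ has colour $i$ in the current graph but colour $j \neq i$ in $H$, we have $j \in \Delta(i)$, so there is $\tau \in [\Gamma,\Gamma]$ with $\tau(i) = j$, and Lemma~\ref{lem:switchone} provides a switching sequence that recolours $uw$ from $i$ to $j$ and fixes every other edge. Composing these corrections across all edges transforms $G^{\Sigma}$ into $H$, so $\swequiv{G^{\Sigma}}{H}$; since $\swequiv{G}{G^{\Sigma}}$ and $\Gamma$-equivalence is transitive (switching sequences compose and each switch is invertible), we conclude $\swequiv{G}{H}$.

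The only genuine obstacle is this last step of the converse: one must be sure the Lemma~\ref{lem:switchone} repairs can be applied edge by edge without disturbing already-corrected edges, which is exactly guaranteed because each such sequence changes precisely one edge colour. The rest is bookkeeping with the quotient map $q$ and the well-definedness of switching on $G^{\Gamma}$ that was verified before the theorem statement.
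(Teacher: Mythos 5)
Your proof is correct and takes essentially the same approach as the paper: both directions transport switching sequences along the quotient map using the relation $\pi'(\Delta(i)) = \Delta(\pi(i))$, and the converse repairs the residual per-edge discrepancies (which lie within $[\Gamma,\Gamma]$-orbits) via Lemma~\ref{lem:switchone}. The only cosmetic difference is that the paper inducts on single switches and repairs after each one, whereas you lift the entire sequence first and repair all edges at the end.
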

	
\begin{proof}
Suppose $H = G^\Sigma$. By induction it suffices to consider the case $\Sigma = \{ (u, \pi) \}$.  Consider the edges incident with $u$. Suppose one such edge $uv$ has colour $i$ in $G$ and colour $j = \pi(i)$ in $H$.  In $G^{\Gamma}$ the edge $uv$ has colour $\Delta(i)$ and it has the colour $\Delta(j)$ in $H^{\Gamma}$. Letting $\pi' = \pi[\Gamma, \Gamma]$, we have $\pi'(\Delta(i)) = \Delta(\pi(i)) = \Delta(j)$.  This holds for all edges incident with $u$ and thus $(G^{\Gamma})^{(u,\pi')} = H^{\Gamma}$.

Now suppose $\swequiv[\Gamma^{ab}]{G^{\Gamma}}{H^{\Gamma}}$. As above it suffices to consider the case $H^{\Gamma} = (G^{\Gamma})^{(u,\pi')}$ for some $\pi' \in \Gamma^{ab}$. Again we consider the edges incident with $u$ and focus on an arbitrary edge $uv$.
Suppose in $G^{\Gamma}$ the edge $uv$ is colour $\Delta(s)$ and colour $\Delta(t)$ in $H^{\Gamma}$ where $\pi'(\Delta(s)) = \Delta(t)$.  By construction, in $G$ the edge $uv$ is some colour $i \in \Delta(s)$ and in $H$ some colour $j \in \Delta(t)$.

Suppose $\pi'$ is the coset $\pi[\Gamma, \Gamma]$. Since the switching operation is well defined, we have $\pi(i) = j'$ for some $j' \in \Delta(t)$. Thus, in $G^{(u,\pi)}$, the edge $uv$ has colour $j'$.  Since $j, j' \in \Delta(t)$, there is $\tau \in [\Gamma, \Gamma]$ such that $\tau(j') = j$. Using Lemma~\ref{lem:switchone}, there is a sequence $\Sigma$ that changes $uv$ from colour $j'$ to $j$ and leaves all other edges unchanged.  That is, in $G^{(u,\pi)\Sigma}$ the edge $uv$ has colour $j$.  After doing this for each edge incident at $u$, we see $\swequiv{G}{H}$.
\end{proof}
	
\begin{corollary}\label{cor:switchhomAbel}
Let $G$ and $H$ be $m$-edge-coloured graphs and $\Gamma$ be an $m$-switching group.  Then $\swhom{G}{H}$ if and only if $\swhom[\Gamma^{ab}]{G^{\Gamma}}{H^{\Gamma}}$.
\end{corollary}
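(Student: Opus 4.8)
The plan is to read everything off Theorem~\ref{thm:switchAbel}, using only the elementary observation that an ordinary, colour-preserving homomorphism interacts cleanly with the Abelianization operation $G \mapsto G^{\Gamma}$. Recall that $\swhom{G}{H}$ means there is an $m$-edge coloured graph $G'$ with $\swequiv{G}{G'}$ and a homomorphism $G' \to H$, and similarly $\swhom[\Gamma^{ab}]{G^{\Gamma}}{H^{\Gamma}}$ asks for an $m'$-edge coloured graph $K$ with $\swequiv[\Gamma^{ab}]{G^{\Gamma}}{K}$ and a homomorphism $K \to H^{\Gamma}$.

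For the forward implication, suppose $\swhom{G}{H}$, witnessed by $\swequiv{G}{G'}$ and $f \colon G' \to H$. First apply Theorem~\ref{thm:switchAbel} to obtain $\swequiv[\Gamma^{ab}]{G^{\Gamma}}{(G')^{\Gamma}}$. Then observe that the very same vertex map $f$ is a homomorphism $(G')^{\Gamma} \to H^{\Gamma}$: an edge $uv$ of colour $\Delta(i)$ in $(G')^{\Gamma}$ has colour $i$ in $G'$, so $f(u)f(v)$ has colour $i$ in $H$ and hence colour $\Delta(i)$ in $H^{\Gamma}$. By definition this gives $\swhom[\Gamma^{ab}]{G^{\Gamma}}{H^{\Gamma}}$.

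For the converse, suppose $\swhom[\Gamma^{ab}]{G^{\Gamma}}{H^{\Gamma}}$, witnessed by $K$ with $\swequiv[\Gamma^{ab}]{G^{\Gamma}}{K}$ and $g \colon K \to H^{\Gamma}$. The key step is to lift $g$ to an $m$-edge colouring: let $G'$ be the $m$-edge coloured graph on the underlying graph of $G$ (which is also that of $K$, since switching and Abelianization both fix the underlying graph) in which each edge $uv$ receives the colour that $g(u)g(v)$ has in $H$. By construction $g \colon G' \to H$ is a colour-preserving homomorphism, so it suffices to show $\swequiv{G}{G'}$. For this I would check $(G')^{\Gamma} = K$: if $uv$ has colour $\beta$ in $K$ then $g(u)g(v)$ has colour $\beta$ in $H^{\Gamma}$, hence colour $c$ in $H$ for some $c$ with $\Delta(c) = \beta$, so $(G')^{\Gamma}$ assigns $uv$ the colour $\Delta(c) = \beta$. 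Thus $\swequiv[\Gamma^{ab}]{G^{\Gamma}}{(G')^{\Gamma}}$, and Theorem~\ref{thm:switchAbel} gives $\swequiv{G}{G'}$.

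I expect the converse to be the only place needing thought: the witness $g$ is merely block-preserving, so it is \emph{not} in general a homomorphism $G \to H$, and the temptation to repair $g$ or $K$ directly (for instance via Lemma~\ref{lem:switchone} applied edge by edge) leads to a more cumbersome argument. Defining $G'$ from the image of $g$ sidesteps this entirely and reduces the remaining work to the already-established Theorem~\ref{thm:switchAbel}.
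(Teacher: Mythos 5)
Your proposal is correct, and while the forward direction coincides with the paper's argument (apply Theorem~\ref{thm:switchAbel} to the witnessing switch-equivalence and observe that a colour-preserving homomorphism $G' \to H$ descends to $(G')^{\Gamma} \to H^{\Gamma}$), your converse takes a genuinely different and cleaner route. The paper's converse works at the level of switching sequences: it lifts the $\Gamma^{ab}$-sequence $\Sigma$ to $G$ by choosing a coset representative for each switch, observes that each edge then lands in the correct block but possibly at the wrong colour within that block, and repairs each edge individually via Lemma~\ref{lem:switchone} to match the colour forced by $f$ and $H$ --- in effect re-running part of the proof of Theorem~\ref{thm:switchAbel}. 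You instead construct the target colouring $G'$ directly by pulling colours back from $H$ through $g$, verify $(G')^{\Gamma} = K$, and then invoke Theorem~\ref{thm:switchAbel} as a black box to conclude $\swequiv{G}{G'}$. This delegates all of the coset-representative and edge-repair work to the already-proved theorem rather than duplicating it, at the small cost of introducing the auxiliary graph $G'$ and checking that its Abelianization recovers $K$ (which you do correctly; note $g(u)g(v)$ is indeed an edge of $H$ since $H$ and $H^{\Gamma}$ share an underlying graph). Both arguments are sound; yours better isolates where the group-theoretic content lives.
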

	
\begin{proof}
Let $G$ and $H$ be $m$-edge-coloured graphs with Abelianizations $G^\Gamma$ and $H^\Gamma$ respectively.
Suppose $f: \swhom{G}{H}$. By definition, $f: G^{\Sigma} \to H$ for some sequence of switches $\Sigma$.  Consider an edge $uv$ of colour $i$ in $G$, and colour $i'$ in $G^{\Sigma}$.  Then by Theorem~\ref{thm:switchAbel} there is a sequence $\Sigma'$ of switches on $G^{\Gamma}$ so that edge $uv$ switches from $\Delta(i)$ to $\Delta(i')$.  In $H$, the edge $f(u)f(v)$ has colour $i'$ and thus in $H^{\Gamma}$ it has colour $\Delta(i')$.  Hence, the mapping $f: (G^{\Gamma})^{\Sigma'} \to H^{\Gamma}$ is a homomorphism as required.

Similarly, suppose $f: \swhom[\Gamma^{ab}]{G^{\Gamma}}{H^{\Gamma}}$, so that $f: (G^{\Gamma})^{\Sigma} \to H^{\Gamma}$.
Let $uv$ have colour $\Delta(s)$ in $G^\Gamma$ and colour $\Delta(t)$ in $(G^{\Gamma})^\Sigma$.  In $G$ the edge $uv$ has colour $i \in \Delta(s)$.  After applying the corresponding switches from $\Sigma$ (by selecting a representative from each coset) to $G$ to obtain $G^{\Sigma'}$, the edge $uw$ has colour $j$ for some $j \in \Delta(t)$.  In $H^\Gamma$, the edge $f(u)f(v)$ must have colour $\Delta(t)$ and hence $f(u)f(v)$ in $H$ has some colour $j' \in \Delta(t)$.  Thus, there is $\tau \in [\Gamma, \Gamma]$ such that $\tau(j) = j'$.  Hence, by Lemma~\ref{lem:switchone}, we can switch $G^{\Sigma'}$ so that $uv$ has colour $j'$.  Processing each edge in this way shows there is a sequence of switches $\Sigma''$ such that $\swequiv{G}{G^{\Sigma''}}$ and $f: G^{\Sigma''} \to H$ is a homomorphism as required.
\end{proof}

It is not immediately clear that the number of switches required to switch an $m$-edge-coloured graph $G$ to a $\Gamma$-equivalent $m$-edge-coloured graph $G^{\Sigma}$ is polynomial in the size of $G$.  If $\Gamma$ is Abelian, then there is a sequence that switches each vertex at most once.  As $\Gamma^{ab}$ is Abelian, we have the following corollary.

\begin{corollary}\label{cor:inNP}
Let $H$ be a fixed $m$-edge-coloured graph and let $\Gamma$ be a fixed $m$-switching group. Then \swhhom{$H$}\ is in NP.
\end{corollary}
	
\begin{proof}
As $\Gamma$ is fixed, the groups $\Gamma^{ab}$ and $[\Gamma, \Gamma]$ are of fixed sized.  We can certify $G$ is a yes-instance of \swhhom{$H$}\ by providing the switching sequence $\Sigma$ and the function $f: V(G) \to V(H)$.  By Corollary~\ref{cor:switchhomAbel}, we know the switches of $G$ consist of switches from $\Gamma^{ab}$ and switches from $[\Gamma, \Gamma]$.  As $\Gamma^{ab}$ is Abelian, each vertex needs to be switched at most once by an element of $\Gamma^{ab}$, and as $[\Gamma, \Gamma]$ is fixed, the sequence used to change the colour of a single edge (as in Lemma~\ref{lem:switchone}) is of bounded sized.  Thus, the switching sequence $\Sigma$ has length $O(|E(G)|)$.
\end{proof}	

We now state the full dichotomy theorem for \swhhom{$H$}.  
A fundamental step in the polynomial-time case is testing if a given $G$ can be switched to be monochromatic of some colour $i$.  
As $\Gamma^{ab}$ is Abelian, this can be done as described previously. 

\begin{theorem}\label{thm:fulldichotomy}
Let $\Gamma$ be an $m$-switching group.  For a fixed $m$-edge-coloured graph $H$, if
$H$ is $\Gamma$-homomorphically equivalent to a monochromatic bipartite graph,
then \swhhom{$H$} is polynomial-time solvable;  
otherwise, \swhhom{$H$} is NP-complete.
\end{theorem}

\begin{proof}
Note that $H$ is $\Gamma$-homomorphically equivalent to a monochromatic bipartite graph if and only if 
it is $\Gamma$-homomorphically equivalent to a monochromatic $K_1$ or $K_2$. 
The case for $K_1$ is trivial.
Consider the case where $H$ is 
$\Gamma$-homomorphically equivalent to a monochromatic $K_2$.
Without loss of generality, $H = K_2^i$.  Then $H^{\Gamma} = K_2^{\Delta(i)}$.
By Corollary~\ref{cor:switchhomAbel},
$\swhom{G}{H}$ if and only if $\swhom[{\Gamma^{ab}}]{G^{\Gamma}}{H^\Gamma}$, which can be tested
in polynomial time by 
Theorem \ref{thm:Arnott}.

Conversely if $H$ is not $\Gamma$-homomorphically equivalent to a monochromatic bipartite graph, then $H^{\Gamma}$ contains an odd cycle, or is bipartite but with a cycle that switches to be nearly monochromatic of some colours $(\Delta(i),\Delta(j))$.  Hence,
\swhhom[{$\Gamma^{ab}$}]{$H^{\Gamma}$} is NP-complete by Theorem~\ref{thm:abelianP}.  
Thus, \swhhom{$H$} is NP-complete by Corollary~\ref{cor:switchhomAbel}.
\end{proof}

The following corollary gives the structural characterization of the dichotomy.

\begin{corollary}
Let $\Gamma$ be an $m$-switching group.  For a fixed $m$-edge-coloured graph $H$, if
\begin{list}{(\roman{enumi})}{\usecounter{enumi}}
  \setlength\itemsep{0pt}
  \item $H$ is bipartite; and
  \item can be switched to be monochromatic,
\end{list}
then \swhhom{$H$} is polynomial-time solvable.  
Otherwise
\begin{list}{(\roman{enumi})}{\usecounter{enumi}}
  \setcounter{enumi}{2}
  \setlength\itemsep{0pt}
  \item $H$ contains an odd cycle, or 
  \item $H^{\Gamma}$ contains an (even) length cycle that can be switched (by $\Gamma^{ab}$) to be nearly monochromatic of colours $(\Delta(k), \Delta(l))$,
\end{list}
and \swhhom{$H$} is NP-complete.
\end{corollary}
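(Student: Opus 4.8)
The plan is to obtain this corollary as a direct structural translation of Theorem~\ref{thm:fulldichotomy}, so the work is entirely in matching up the hypotheses. The first thing I would record is the equivalence: \emph{$H$ is $\Gamma$-homomorphically equivalent to a monochromatic bipartite graph if and only if conditions (i) and (ii) both hold.} Using the observation (already noted in the proof of Theorem~\ref{thm:fulldichotomy}) that such $H$ is $\Gamma$-homomorphically equivalent to a monochromatic $K_1$ or a monochromatic $K_2^i$, the forward direction goes as follows: a $\Gamma$-homomorphism of $H$ onto $K_2^i$ is in particular a homomorphism of underlying graphs onto $K_2$ after a switch — and switching does not alter the underlying graph — so $H$ is bipartite, and that same switch makes every edge colour $i$, so $H$ can be switched to be monochromatic. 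Conversely, if $H$ is bipartite and switches to be monochromatic of colour $i$, a proper $2$-colouring of the (unchanged) underlying graph is a homomorphism of the switched graph to $K_2^i$, and $K_2^i \to H$ holds since $H$ has an edge of colour $i$ (if $H$ is edgeless, it is instead $\Gamma$-homomorphically equivalent to $K_1$). This hands us the polynomial-time half immediately from Theorem~\ref{thm:fulldichotomy}.

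For the NP-complete half I would show that the negation of ``(i) and (ii)'' is exactly ``(iii) or (iv)''. If $H$ is not bipartite, it contains an odd cycle, giving (iii). So suppose $H$ is bipartite but cannot be switched to be monochromatic; I need to exhibit a cycle of the kind in (iv). The key intermediate claim is that \emph{$H$ can be switched by $\Gamma$ to be monochromatic if and only if $H^{\Gamma}$ can be switched by $\Gamma^{ab}$ to be monochromatic}: one direction is immediate from Theorem~\ref{thm:switchAbel}, while the other direction needs Lemma~\ref{lem:switchone} — a $\Gamma^{ab}$-switch of $H^{\Gamma}$ whose edge colours all lie in a single block $\Delta(i)$ lifts, by Theorem~\ref{thm:switchAbel}, to a $\Gamma$-switch of $H$ all of whose edge colours lie in $\Delta(i)$, and then Lemma~\ref{lem:switchone} is applied edge by edge to correct each such colour to exactly $i$. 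Consequently $H^{\Gamma}$ is bipartite (it has the same underlying graph as $H$) but cannot be switched by the Abelian group $\Gamma^{ab}$ to be monochromatic. Now I would rerun, verbatim, the spanning-tree argument from the proof of Theorem~\ref{thm:abelianP}, this time inside $H^{\Gamma}$ with the group $\Gamma^{ab}$: fix a spanning tree and switch it to be monochromatic of colour $\Delta(i)$; since $H^{\Gamma}$ is not switchable to monochromatic, some co-tree edge has colour $\Delta(l)\neq\Delta(i)$, and the fundamental cycle it closes is $\Gamma^{ab}$-equivalent to a nearly monochromatic cycle of colours $(\Delta(i),\Delta(l))$, which is precisely (iv). In either case $H$ is not $\Gamma$-homomorphically equivalent to a monochromatic bipartite graph, so \swhhom{$H$} is NP-complete by Theorem~\ref{thm:fulldichotomy}.

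To close, I would verify the reverse implications so that the two cases genuinely partition the possibilities: (iii) forces non-bipartiteness and so the failure of (i); and if (iv) holds, then by Proposition~\ref{prop:monocycles} the offending cycle of $H^{\Gamma}$ lies in case~(ii) of that proposition and hence cannot be switched by $\Gamma^{ab}$ to be monochromatic, so $H^{\Gamma}$ — and therefore, by the equivalence above, $H$ — cannot be switched to be monochromatic, i.e. (ii) fails. Thus for every $H$ exactly one of ``(i) and (ii)'' and ``(iii) or (iv)'' holds.

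The only real content, and the step to handle with care, is the equivalence ``$H$ switches to monochromatic $\iff$ $H^{\Gamma}$ switches to monochromatic'': the lifting direction is not purely formal, since Theorem~\ref{thm:switchAbel} only delivers ``all edge colours in a common block'' and one must then invoke Lemma~\ref{lem:switchone} to upgrade this to ``all edge colours equal''; here I would note that the finitely many single-edge correction sequences do not interfere, as each one leaves the colours of all other edges fixed. Everything else is bookkeeping that rephrases Proposition~\ref{prop:monocycles} and Theorems~\ref{thm:switchAbel} and~\ref{thm:fulldichotomy} in the language of (i)--(iv).
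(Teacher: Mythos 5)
Your proof is correct and follows exactly the route the paper intends: the corollary is stated without proof as an immediate restatement of Theorem~\ref{thm:fulldichotomy}, and your argument supplies precisely the translation (``$\Gamma$-homomorphically equivalent to a monochromatic bipartite graph'' $\iff$ (i) and (ii), with the negation being (iii) or (iv)) that the proof of that theorem already sketches. One small remark: the ``lifting'' direction you flag as delicate is in fact formal, since Theorem~\ref{thm:switchAbel} is an if-and-only-if and a monochromatic $m'$-edge coloured graph of colour $\Delta(i)$ is the Abelianization of the monochromatic $m$-edge coloured graph of colour $i$, so your extra invocation of Lemma~\ref{lem:switchone} merely re-proves a step already contained in that theorem.
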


The importance of nearly monochromatic cycles was identified in~\cite{kid21} where the agreeance class of $i$ (later renamed the substitution class of $i$ in~\cite{bre_kid_mac21}) is defined to be the set of all colours $j$ for which a nearly monochromatic cycle of colours $(i,j)$ can be switched to be monochromatic of colour $i$.  For even cycles these classes turn out to be exactly the block system $D = \{\Delta(1), \Delta(2), \ldots, \Delta(m) \}$ as defined by the action of the commutator subgroup. 

\section{Conclusion}

We provided a dichotomy for the \swhhom{$H$} problem for $m$-edge-coloured graphs and transitive groups $\Gamma$.  Natural next steps would be relax either of these two conditions and consider $n$-arc coloured graphs, $(m, n)$-mixed graphs, or groups that do not act transitively on the colours.
Later, one could relax the condition that the graphs be simple, and consider $(m, n)$-mixed graphs with loops, multiple edges / arcs of different colours, or directed 2-cycles.  Care must be taken when there are arcs because switching on the head then the tail of an arc may give a different result than switching in the reverse order.  One could first consider groups for which this difficulty does not arise (subgroups of $S_m \times (S_2 \wr S_n)$), and subsequently consider all possible groups.

\bigskip
\noindent\textbf{Acknowledgements\hspace{0.1in}} 

\smallskip\noindent
We acknowledge the support of the Natural Sciences and Engineering Research Council of Canada (NSERC). RGPIN-2014-04760 (Brewster) and RGPIN-2017-04459 (MacGillivray).

\smallskip\noindent
Nous remercions le Conseil de recherches en sciences naturelles et en g\'{e}nie du Canada (CRSNG) de son soutien. RGPIN-2014-04760 (Brewster) and RGPIN-2017-04459 (MacGillivray).

	\bibliography{bib}{}

\begin{thebibliography}{10}

\bibitem{bar08}
Libor Barto, Marcin Kozik, and Todd Niven.
\newblock The {CSP} dichotomy holds for digraphs with no sources and no sinks
  (a positive answer to a conjecture of {B}ang-{J}ensen and {H}ell).
\newblock {\em SIAM J. Comput.}, 38(5):1782--1802, 2008/09.

\bibitem{bre17}
Richard~C. Brewster, Florent Foucaud, Pavol Hell, and Reza Naserasr.
\newblock The complexity of signed graph and edge-coloured graph homomorphisms.
\newblock {\em Discrete Mathematics}, 340(2):223–235, Feb 2017.

\bibitem{bre09}
Richard~C. Brewster and Timothy Graves.
\newblock Edge-switching homomorphisms of edge-coloured graphs.
\newblock {\em Discrete Mathematics}, 309(18):5540–5546, 2009.

\bibitem{bre_kid_mac21}
Richard~C. Brewster, Arnott Kidner, and Gary MacGillivray.
\newblock {The 2-colouring problem for $(m,n)$-mixed graphs with switching is
  polynomial}, November 2022.

\bibitem{bre18}
Richard~C. Brewster and Mark Siggers.
\newblock A complexity dichotomy for signed {H}-colouring.
\newblock {\em Discrete Math.}, 341(10):2768--2773, 2018.

\bibitem{bul17}
Andrei~A. Bulatov.
\newblock A dichotomy theorem for nonuniform {CSP}s.
\newblock In {\em 2017 IEEE 58th Annual Symposium on Foundations of Computer
  Science (FOCS)}, pages 319--330, 2017.

\bibitem{dix96}
J.D. Dixon and B.~Mortimer.
\newblock {\em Permutation Groups}.
\newblock Graduate Texts in Mathematics. Springer New York, 1996.

\bibitem{mactre21}
Christopher Duffy, Gary MacGillivray, and Ben Tremblay.
\newblock Switching m-edge-coloured graphs with non-{A}belian groups,
  Manuscript, 2023.

\bibitem{fed93}
Tom\'{a}s Feder and Moshe~Y. Vardi.
\newblock Monotone monadic {SNP} and constraint satisfaction.
\newblock In {\em Proceedings of the Twenty-Fifth Annual ACM Symposium on
  Theory of Computing}, STOC '93, page 612–622, New York, NY, USA, 1993.
  Association for Computing Machinery.

\bibitem{fed98}
Tomás Feder and Moshe~Y. Vardi.
\newblock The computational structure of monotone monadic snp and constraint
  satisfaction: A study through datalog and group theory.
\newblock {\em SIAM Journal on Computing}, 28(1):57--104, 1998.

\bibitem{Gallian_2017}
{Joseph A.} Gallian.
\newblock {\em Contemporary abstract algebra}.
\newblock Cengage Learning, ninth edition edition, 2017.

\bibitem{hel90}
Pavol Hell and Jaroslav Nešetřil.
\newblock On the complexity of {H}-coloring.
\newblock {\em Journal of Combinatorial Theory, Series B}, 48(1):92--110, 1990.

\bibitem{hel08}
Pavol Hell and Jaroslav Nešetřil.
\newblock {\em Graphs and Homomorphisms}.
\newblock Oxford Univ. Press, 2008.

\bibitem{kid21}
Arnott Kidner.
\newblock Gamma-switchable 2-colourings of $(m,n)$-mixed graphs.
\newblock Master's thesis, University of Victoria, Victoria, Canada, 2021.
\newblock https://dspace.library.uvic.ca/handle/1828/13337.

\bibitem{klo99}
William~F. Klostermeyer.
\newblock Pushing vertices and orienting edges.
\newblock {\em Ars Combinatorial}, 51:65--75, 1999.

\bibitem{klo04}
William~F. Klostermeyer and Gary MacGillivray.
\newblock Homomorphisms and oriented colorings of equivalence classes of
  oriented graphs.
\newblock {\em Discrete Mathematics}, 274(1):161--172, 2004.

\bibitem{macwar21}
Etienne Leclerc, Gary MacGillivray, and J.~Maria Warren.
\newblock Switching (m,n)-mixed graphs with respect to {A}belian groups,
  Manuscript, 2021.

\bibitem{Naserasr_SZ_2021}
Reza Naserasr, \'{E}ric Sopena, and Thomas Zaslavsky.
\newblock Homomorphisms of signed graphs: an update.
\newblock {\em European J. Combin.}, 91:Paper No. 103222, 20, 2021.

\bibitem{nes00}
Jaroslav Nešetřil and André Raspaud.
\newblock Colored homomorphisms of colored mixed graphs.
\newblock {\em Journal of Combinatorial Theory, Series B}, 80(1):147--155,
  2000.

\bibitem{sen22}
Sagnik Sen, Éric Sopena, and S.~Taruni.
\newblock Homomorphisms of (n,m)-graphs with respect to generalised switch,
  2022.

\bibitem{zhu20}
Dmitriy Zhuk.
\newblock A proof of the {CSP} dichotomy conjecture.
\newblock {\em J. ACM}, 67(5), August 2020.

\end{thebibliography}
	\bibliographystyle{plain}

\end{document}